\newcommand{\bea}{\begin{eqnarray*}}
\newcommand{\eea}{\end{eqnarray*}}
\newcommand{\bm}{\begin{pmatrix}}
\newcommand{\fm}{\end{pmatrix}}
\newcommand\Z{\mathbb Z}
\newcommand\N{\mathbb N}
\newcommand\Q{\mathbb Q}
\newcommand\R{\mathbb R}
\renewcommand\L{\mathcal L}
\newcommand\GGG{\Gamma_d(q)}
\newcommand\Gk{\Gamma^k_d(q)}
\newtheorem{theorem}{Theorem}[section]
\newtheorem{proposition}[theorem]{Proposition}
\newtheorem{prop}[theorem]{Proposition}
\newtheorem{lemma}[theorem]{Lemma}
\newtheorem{observation}[theorem]{Observation}
\newtheorem{corollary}[theorem]{Corollary}
\newtheorem{definition}[theorem]{Definition}
\title{Bilipschitz versus quasi-isometric equivalence for higher rank lamplighter groups}
\author{Tullia Dymarz}
\address{Department of Mathematics,
University of Wisconsin-Madison, 480 Lincoln Drive,  Madison, WI 53706} \email{dymarz@math.wisc.edu}
\author{Irine Peng}
\address{Department of Mathematics, Pohang University of Science and Technology, 31 San, Hyoja-dong, Namgu, Pohang, Gyeongsanbukdo, South Korea.}
\email{irinepeng@postech.ac.kr}\author{Jennifer Taback}
\address{Department of Mathematics,
Bowdoin College, 8600 College Station, Brunswick, ME 04011} \email{jtaback@bowdoin.edu}
\thanks{The first author acknowledges support from National Science Foundation grant DMS-1207296. The third author acknowledges support from National Science Foundation grant DMS-1105407.}
\keywords{quasi-isometric equivalence, bilipschitz equivalence, higher rank lamplighter groups, Diestel-Leader graphs, Diestel-Leader groups}
\begin{document}
\maketitle

\begin{abstract}
We describe a family of finitely presented groups which are quasi-isometric but not bilipschitz equivalent.  The first such examples were described by the first author in \cite{Dy} and are the lamplighter groups $F \wr \Z$ where $F$ is a finite group; these groups are finitely generated but not finitely presented.   The examples presented in this paper are higher rank generalizations of these lamplighter groups and include groups that are of type $F_n$ for any $n$.\end{abstract}

\section{Introduction}
In this paper we present the first finitely presented examples of groups which are quasi-isometric but not bilipschitz equivalent, generalizing the finitely generated examples given by the first author in \cite{Dy}.  Moreover, for any $n$ this family of examples contains groups which are of type $F_n$.

The groups used in \cite{Dy} to construct finitely generated examples of groups with this property were the lamplighter groups $F \wr \Z$, where $F$ is a finite group; our examples are higher rank analogues of these groups.  The main theorem of \cite{Dy} relies on the fact that with respect to a certain generating set, the Cayley graph of $F \wr \Z$ is a {\em Diestel-Leader graph}, which is defined as a particular subset of the product of two trees.  The higher rank lamplighter groups have a preferred generating set with resulting Cayley graph identified with the one skeleton of ``larger" Diestel-Leader complexes which are subsets of products of more than two trees.
We denote these groups $\GGG$ and refer to them as {\em Diestel-Leader groups}; the corresponding Cayley graphs are denoted $DL_d(q)$.
 These graphs and their geometry are discussed in Section \ref{sec:groups} below.

The two results are as follows:
\begin{theorem}\label{thebigone}[\cite{Dy}, Theorem 1.1]
Let $F$ and $G$ be finite groups with $|F|=n$ and $|G|=n^k$ where $k>1$. Then there does not exist a bilipschitz equivalence between the lamplighter groups $G \wr \Z$ and $F \wr \Z$  if $k$ is not a product of prime factors appearing in $n$.
\end{theorem}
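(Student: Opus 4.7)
The plan is to assume a bilipschitz bijection $\Phi \colon F \wr \Z \to G \wr \Z$ exists and extract from it an impossible bilipschitz bijection of $n$-adic metric spaces. For the natural lamplighter generating sets, the Cayley graphs are identified with the Diestel-Leader graphs $DL(n,n)$ and $DL(n^k,n^k)$, each realized as a height-zero subset of a product of two regular trees with a distinguished end. The two tree factors equip each graph with commuting horocyclic foliations, and the deleted boundary of each factor tree carries a canonical ultrametric isometric to $\Q_n$ (respectively $\Q_{n^k}$).

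My first step is to apply quasi-isometric rigidity for Diestel-Leader graphs, due to Eskin-Fisher-Whyte: after bounded perturbation, every quasi-isometry is a product of quasi-isometries of the two factor trees, possibly composed with a swap. Since $\Phi$ is bilipschitz, a fortiori a quasi-isometry, it has this product form and induces quasisymmetric boundary maps between the punctured boundary ultrametric spaces, i.e.\ quasisymmetric maps $\Q_n \to \Q_{n^k}$ on each factor.

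The second and more delicate step is to upgrade these quasisymmetric boundary maps to an honest bilipschitz bijection $\Q_n \to \Q_{n^k}$, using the bilipschitz (and not merely quasi-isometric) hypothesis on $\Phi$. A natural route is via Whyte's characterization: the existence of the bilipschitz bijection $\Phi$ forces the class of $\mu_{F\wr\Z} - \Phi^{\ast}\mu_{G\wr\Z}$ in uniformly finite homology $H_0^{\mathrm{uf}}$ to vanish. Combined with the product form from rigidity, this vanishing can be restricted to horocyclic cross-sections and translated into a density-matching statement whose completion produces a genuine bilipschitz bijection of the relevant $n$-adic spaces.

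Finally I would appeal to a rigidity theorem for bilipschitz equivalence of $n$-adic metric spaces (of Cooper type): $\Q_n$ and $\Q_{n^k}$ are bilipschitz equivalent only when every prime dividing $k$ also divides $n$, that is, only when $k$ is a product of prime factors appearing in $n$. The hypothesis of the theorem excludes this, yielding the contradiction. The principal obstacle is the second step: quasisymmetric equivalence of $\Q_n$ and $\Q_{n^k}$ is automatic, so extracting a true bilipschitz boundary equivalence requires carefully tracking metric constants through the Eskin-Fisher-Whyte rigidity argument and converting a global $H_0^{\mathrm{uf}}$ obstruction into a local matching of horocyclic densities. This is where the arithmetic content of the theorem must actually enter the geometric argument, and it is the step I expect to absorb the bulk of the technical work.
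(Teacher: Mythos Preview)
Your plan founders at step 3. Cooper's theorem states that $\Q_m$ and $\Q_n$ are bilipschitz equivalent precisely when $m$ and $n$ have the same set of prime divisors; since $n$ and $n^k$ trivially share prime divisors, $\Q_n$ and $\Q_{n^k}$ are \emph{always} bilipschitz equivalent, for every $k$. So producing a bilipschitz boundary bijection $\Q_n\to\Q_{n^k}$ yields no contradiction whatsoever --- the ``rigidity theorem'' you invoke in step 3 is simply false as stated. Your step 2 is also misdirected: the boundary maps delivered by Eskin--Fisher--Whyte rigidity are already bilipschitz, not merely quasisymmetric, so there is nothing to upgrade; and even if there were, you would be upgrading to a map whose mere existence carries no information.

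The argument in \cite{Dy}, paralleled in this paper for higher rank, does not attempt to obstruct the \emph{existence} of bilipschitz boundary maps. Instead one composes the assumed bijection with an explicit $k$-to-$1$ collapsing map to obtain a $k$-to-$1$ self-quasi-isometry $\Phi$ of a single space $X$. Then $\Phi_*[X]=k[X]$ in $H_0^{uf}(X)$ because $\Phi$ is $k$-to-$1$. On the other hand, EFW rigidity puts $\Phi$ at bounded distance from a product of bilipschitz boundary self-maps, which may be taken measure-linear with constants $\lambda_1,\lambda_2$; a F\o lner computation over boxes then shows that $\Phi_*[X]=k[X]$ forces $1/(\lambda_1\lambda_2)=k$. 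The arithmetic enters through the constraint that each $\lambda_i$ is a product of powers of primes dividing $n$, so $k$ would also have to be such a product --- contradicting the hypothesis. The obstruction lives in the \emph{Jacobian} of the specific boundary maps induced by $\Phi$, not in the bilipschitz type of the boundary space.
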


The proof of this theorem and of Theorem \ref{thm:main}  and Corollary \ref{cor:mainthm} below rely on the existence of an index $k$ subgroup of the original group, which is necessarily quasi-isometric to the group; the Cayley graph of this finite index subgroup is the graph $DL_d^k(q)$ (see Section \ref{finindsec} for a definition).  Our main result below concerns these graphs, which we show to be quasi-isometric but not bilipschitz equivalent to the original Cayley graphs $DL_d(q)$ for all parameter values $d$ and $q$.
Only for certain values of these parameters is there a corresponding Diestel-Leader group $\Gamma_d(q)$ whose Cayley graph (relative to a given generating set) is $DL_d(q)$.  In those cases, we obtain the analogous result on the level of groups.

\begin{theorem}\label{thm:main} $DL_d^k(q)$ is quasi-isometric to $DL_d(q)$ but not bilipschitz equivalent if $k$ is not a product of prime factors appearing in $q$. \end{theorem}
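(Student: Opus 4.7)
The plan is to split the theorem into the two halves: quasi-isometric and not bilipschitz. The first is essentially free. By construction (see the reference to Section \ref{finindsec}) the graph $DL_d^k(q)$ is the Cayley graph of an index-$k$ subgroup of $\GGG$, or more precisely of a finite-index overgroup sitting inside the natural product-of-trees model. Since a finitely generated group is quasi-isometric to any finite-index subgroup with the word metric, and since the quasi-isometry type of a Cayley graph only depends on the group, we immediately obtain a quasi-isometry $DL_d^k(q) \to DL_d(q)$. No condition on $k$ or $q$ is needed here.

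For the negative half I would argue by contradiction. Suppose $\phi : DL_d^k(q) \to DL_d(q)$ is a bilipschitz equivalence. Both graphs embed as uniformly discrete, cobounded subsets (separated nets) inside a common model space, namely the horocyclic product of $d$ regular trees where the valences are chosen so that $DL_d(q)$ and $DL_d^k(q)$ both sit as $0$-skeleta. The first step in the argument is to promote $\phi$ to a bilipschitz equivalence of separated nets in this common product-of-trees model. The second step is to exploit the product/horocycle structure: using boundary and horocycle techniques in the spirit of Eskin--Fisher--Whyte and of the structure theory developed in \cite{Dy} for rank one, any bilipschitz map between two such nets must, at large scale, be a measurable map of horospheres respecting (up to bounded multiplicative error) the natural Haar measures coming from the tree structure. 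This gives us an invariant "density ratio'' between the two nets.

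The third and crucial step is to compute this ratio and extract an arithmetic constraint. The $0$-skeleton of $DL_d(q)$ has exactly $q$-times branching at each level in each factor tree, while the $0$-skeleton of $DL_d^k(q)$ has been modified by an index-$k$ subgroup construction so that the relative density is precisely $k$. Following the Whyte-style density theorem for separated nets adapted to horocyclic products (as in \cite{Dy}), the two nets are bilipschitz equivalent if and only if this ratio $k$ can be realized inside the "translation group'' of the model, which, because each tree is $(q+1)$-regular, only sees integer powers of $q$ and therefore only the primes dividing $q$. This forces $k$ to be a product of prime factors of $q$, contradicting our hypothesis.

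The step I expect to be the main obstacle is the second one: lifting the combinatorial bilipschitz map to a well-defined, measure-compatible map of horospheres in the higher-rank setting. In rank one (i.e.\ $d=2$) one has two trees and a single horocyclic parameter, so the boundary analysis is essentially one-dimensional, and the density invariant from \cite{Dy} is easy to formulate. For $d \geq 3$ the horocyclic product has a higher-dimensional "flat'' direction, so one must control how $\phi$ distributes mass across this flat, and one must check that the obstruction detected at the level of a single horocycle still survives after averaging against this extra direction. Once that geometric/measure-theoretic lift is in place, the final arithmetic contradiction is a direct generalization of the rank one argument in \cite{Dy}.
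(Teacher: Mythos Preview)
Your overall strategy matches the paper's: the quasi-isometry is immediate from the finite-index construction, and the obstruction to bilipschitz equivalence is arithmetic, detected via boundary rigidity in the style of Eskin--Fisher--Whyte. However, the specific mechanism you sketch diverges from the paper in two places, and as written your Step~3 is not quite the right invariant.

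First, the paper does not compare the two nets directly inside a common model and does not invoke any ``translation group''. Instead it composes the hypothetical bijective quasi-isometry $\varphi: DL_d^k(q) \to DL_d(q)$ with an explicit $k$-to-$1$ collapsing map $u: DL_d(q) \to DL_d^k(q)$ at bounded distance from the identity (Proposition~\ref{ktoone}), producing a $k$-to-$1$ \emph{self} quasi-isometry $\Phi$ of $DL_d^k(q)$. This reduction is what puts one in the setting of Peng's rigidity theorem (Theorem~\ref{pengthm}): every self quasi-isometry is, up to permuting the factors, bounded distance from a product $\phi_1\times\cdots\times\phi_d\times id$ of bilipschitz maps of $\L_q((t))$. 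You correctly anticipate that this higher-rank rigidity, with its extra flat directions, is the main technical input; the paper imports it as Theorem~\ref{pengthm} and only sketches its proof in the appendix.

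Second, the contradiction is not extracted from a density-ratio or translation-group argument but from \emph{uniformly finite homology} together with a F{\o}lner counting on boxes. One first replaces the boundary maps $\phi_i$ by measure-linear ones with constants $\lambda_i$, each a product of primes dividing $q$, while preserving the $k$-to-$1$ property (Proposition~\ref{blergh2}). A $k$-to-$1$ map sends the fundamental class $[X]$ to $k[X]$; on the other hand a box-counting estimate (Lemma~\ref{badonboxes}, Proposition~\ref{blergh}) shows the induced map on $H_0^{uf}$ is multiplication by $1/(\lambda_1\cdots\lambda_d)$. Hence $k=1/\prod_i\lambda_i$, which is impossible when $k$ is not a product of primes of $q$. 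Your ``density ratio'' intuition is gesturing toward this, but the actual invariant lives in $H_0^{uf}$ and is computed against the F{\o}lner sequence of boxes constructed in Section~\ref{boxsec}; neither of these ingredients appears in your sketch and both are essential to make the argument go through.
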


\begin{corollary} \label{cor:mainthm} For each $n$ there exist groups of type $F_n$ which are quasi-isometric but not bilipschitz equivalent.
\end{corollary}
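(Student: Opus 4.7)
The plan is to derive the corollary from Theorem \ref{thm:main} by finding, for each $n$, a Diestel-Leader group $\Gamma_d(q)$ of type $F_n$ together with a finite index subgroup of index $k$ (whose Cayley graph is $DL_d^k(q)$) where $k$ is not a product of prime factors appearing in $q$.

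First I would record the finiteness properties of Diestel-Leader groups. The group $\Gamma_d(q)$ acts on the product of $d$ regular trees in a way that realizes $DL_d(q)$ as the level set of a height function, and standard Bestvina--Brady/Morse-theoretic arguments (as developed for such higher rank lamplighter-type groups) show that $\Gamma_d(q)$ is of type $F_{d-1}$ but not $F_d$. Thus, given $n$, we may take $d = n+1$, which guarantees that $\Gamma_{n+1}(q)$ is of type $F_n$ for any allowed $q$. I would then specialize to a case where the prime factor condition of Theorem \ref{thm:main} can be violated: choose $q$ to be a prime (say $q=2$) and pick $k$ to be any prime not dividing $q$ (say $k=3$). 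Then $k$ is manifestly not a product of prime factors of $q$.

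Next I would produce the required finite index subgroup. As stated in the paragraph preceding Theorem \ref{thm:main}, the construction of $DL_d^k(q)$ is precisely arranged so that it arises as the Cayley graph of an index $k$ subgroup of $\Gamma_d(q)$; I would cite the construction from Section \ref{finindsec}. Call this subgroup $H \leq \Gamma_d(q)$. Since $H$ has finite index in $\Gamma_d(q)$, it is quasi-isometric to $\Gamma_d(q)$ (via the inclusion, using any word metrics), and finiteness properties are inherited by finite index subgroups, so $H$ is also of type $F_n$.

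Finally I would invoke Theorem \ref{thm:main}: since $k$ is not a product of prime factors of $q$, the Cayley graphs $DL_d^k(q)$ and $DL_d(q)$ are quasi-isometric but not bilipschitz equivalent. A bilipschitz equivalence between the finitely generated groups $H$ and $\Gamma_d(q)$ (with respect to any word metrics) would descend to a bilipschitz equivalence between their Cayley graphs (up to passing to the preferred generating sets which realize the Diestel-Leader structure — a routine adjustment, since changing generating sets on a finitely generated group yields bilipschitz equivalent word metrics). Hence $H$ and $\Gamma_d(q)$ are quasi-isometric but not bilipschitz equivalent groups of type $F_n$, proving the corollary.

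The main obstacle is really just citing or reviewing the correct finiteness result for $\Gamma_d(q)$ so that arbitrary $F_n$ is attained by taking $d$ large; the rest is an immediate packaging of Theorem \ref{thm:main} together with the standard fact that finite index subgroups are quasi-isometric to the ambient group and share its finiteness properties.
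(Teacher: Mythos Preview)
Your overall strategy matches the paper's: exhibit $\Gamma_d(q)$ of type $F_n$, take the index-$k$ subgroup $\Gamma_d^k(q)$ from Section~\ref{finindsec} with Cayley graph $DL_d^k(q)$, and invoke Theorem~\ref{thm:main}. There is, however, a genuine gap in your choice of parameters. The construction of $\Gamma_d(q)$ in \cite{BNW} requires $d-1 \le p$ for every prime $p$ dividing $q$ (this is stated in Section~\ref{sec:groups}). Your specialization $d=n+1$, $q=2$ violates this as soon as $n\ge 3$; indeed the paper explicitly notes that $DL_4(2)$ is the smallest case for which it is \emph{unknown} whether the Diestel--Leader graph is a Cayley graph at all. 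So as written your argument only produces examples of type $F_2$.

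The fix is immediate and is what the paper does: choose $q$ compatibly with $d$. For instance, take $q$ to be any prime with $q \ge n = d-1$, so that $\Gamma_{n+1}(q)$ exists and is of type $F_{d-1}=F_n$, and then take $k$ to be any prime different from $q$. The rest of your packaging (finite-index subgroups are quasi-isometric to the ambient group, inherit finiteness properties, and changing finite generating sets is a bilipschitz equivalence) is exactly what the paper uses.
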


The notions of quasi-isometric equivalence and bilipschitz equivalence of metric spaces are closely related; quasi-isometric equivalence is a coarse generalization of bilipschitz equivalence, in the following sense.  Let $X$ and $Y$ be metric spaces with metrics $d_X$ and $d_Y$ respectively.

1. A $K$-{\em bilipschitz equivalence} $g: X \rightarrow Y$ is a bijection satisfying, for all $x,y \in X$,
$$\frac{1}{K}d_X(x,y) \leq d_Y(f(x),f(y)) \leq K d_X(x,y).$$

2. A $(K,C)$-{\em quasi-isometric equivalence} $f: X \rightarrow Y$ is a map satisfying, for all $x,y \in X$,
\begin{enumerate}
\item $\frac{1}{K}d_X(x,y)-C \leq d_Y(f(x),f(y)) \leq K d_X(x,y)+C$, and

\medskip

\item $Nbhd_C(f(X)) = Y$.
\end{enumerate}
For discrete groups, a bilipschitz map is equivalent to a bijective quasi-isometry.
A natural question to ask is for which classes of metric spaces these two notions coincide.  We are further interested in this question for finitely generated groups, which are considered as metric spaces with the word metric $d_S$ arising from a finite generating set $S$.  These notions of equivalence arise naturally for finitely generated groups.  If $S_1$ and $S_2$ are two finite generating sets for a group $G$, then the resulting Cayley graphs are bilipschitz equivalent as metric spaces with the corresponding word metrics.

Earlier examples of metric spaces for which quasi-isometric and bilipschitz equivalence are distinct were given by Burago-Kleiner \cite{BK1} and McMullen \cite{M}.  Both exhibit separated nets in $\R^2$ which are quasi-isometric but not bilipschitz equivalent.  However, these examples do not correspond to the Cayley graphs of finitely generated groups.

This question for finitely generated groups was previously studied by Whyte in \cite{W}, who finds a possible obstruction to their equivalence when the groups are amenable.   Whyte developed a criterion using uniformly finite homology to determine when a map between certain geometric spaces is a bounded distance from a bijection.  We use his results below, but do not develop the theory of uniformly finite homology here; we refer the reader to \cite{BW1,BW2,Dy} for more details.

The proofs of Theorems \ref{thebigone} and \ref{thm:main} are parallel in many ways; we strive to highlight the intricate geometry of the Diestel-Leader groups in this paper, and quote results from \cite{Dy} which are unchanged between the two contexts.  We refer the reader to \cite{Dy} for any omitted proofs.

The first author would like to thank Kevin Wortman for useful conversations.

\section{Groups and geometry}\label{sec:groups}
The proof of Theorem \ref{thebigone} in \cite{Dy} relies on the fact that the geometry of the lamplighter group $F \wr \Z$, where $F$ is a finite group of order $n$, is identified with a Diestel-Leader graph, which is a certain subspace of a product of two trees and defined below in Section \ref{sec:DLgraph}.  That is, there is a particular generating set with respect to which the Cayley graph for $F \wr \Z$ is exactly this Diestel-Leader graph.

The construction of a Diestel-Leader graph is more robust, and one can define analogous graphs which are subsets of the product of any number of trees.  The groups we consider below possess finite generating sets for which the resulting Cayley graphs can be identified with the $1$-skeleton of a ``larger" Diestel-Leader complex.  We make this precise below.

\subsection{Diestel-Leader graphs and complexes.} \label{sec:DLgraph} Let  $T^{q+1}$ denote the infinite regular $q+1$ valent tree oriented with $q$ incoming edges and $1$ outgoing edge at each vertex. Fix a base point and identify each edge with the unit interval. This yields a height function $h:T^{q+1} \rightarrow \R$ on the tree that sends vertices surjectively to $\Z$ and maps the base point to zero.  To be consistent with \cite{BNW} we orient the tree so that the height decreases across any incoming edge.

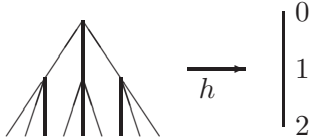
\begin{figure}[h]
\label{fig:trees}
\setlength{\unitlength}{.1in}
\center
\begin{picture}(60,8)(1,.5)
\linethickness{1pt}
\newsavebox{\treeones}
\savebox{\treeones}(-4,8){
\put(-4,1){\line(2,3){4}}
\put(-3,1){\line(1,2){3}}
\put(-2,1){\line(1,3){2}}

\put(-1,1){\line(1,6){1}}
\put(0,1){\line(0,6){6}}
\put(1,1){\line(-1,6){1}}

\put(2,1){\line(-1,3){2}}
\put(3,1){\line(-1,2){3}}
\put(4,1){\line(-2,3){4}}
}

%
%

\newsavebox{\treetwos}
\savebox{\treetwos}(11,8){
\put(11,1){\line(2,3){4}}
\put(12,1){\line(1,3){1}}
\put(13,1){\line(0,1){3}}

\put(14,1){\line(1,3){1}}
\put(15,1){\line(0,6){6}}
\put(16,1){\line(-1,3){1}}

\put(17,1){\line(0,1){3}}
\put(18,1){\line(-1,3){1}}
\put(19,1){\line(-2,3){4}}
}

\put(4,-1){\usebox{\treetwos}}

%
%
%
\put(30,4){\vector(1,0){3}}
\put(35,1){\line(0,1){6}}
\put(31,3){\makebox(0,0){$h$}}
\put(36,1){\makebox(0,0){$2$}}
\put(36,4){\makebox(0,0){$1$}}
\put(36,7){\makebox(0,0){$0$}}

\end{picture}
\caption{A height function $h$ on $T^{3+1}$.}
\end{figure}

Let $T_1,T_2, \cdots ,T_{d}$ denote $d$ copies of $T^{q+1}$, each with a fixed base point, and let $h_i:T_i \rightarrow \R$ be a height function on $T_i$.  The Diestel-Leader complex is the subset of the product of these trees defined by
$$DL_d(q)= \left\{ (x_1, \ldots, x_d) \in T_1 \times T_2 \times \cdots \times T_{d-1}  \mid  \sum_{i=1}^d h_i(x_i) = 0\right\}.$$
We call the one skeleton a Diestel-Leader graph.
To see the structure of this graph, the vertices and edges are specified as follows.
$$Vert(DL_d(q)) = \left\{ (x_1, \ldots, x_d) \in Vert(T_1 \times T_2 \times \cdots \times T_{d-1}) \mid  \sum_{i=1}^d h_i(x_i) = 0\right\}$$
where there is an edge between $(t_1,t_2, \cdots ,t_{d-1}),(s_1,s_2, \cdots ,s_{d-1}) \in Vert(DL_d(q))$ if and only if there are indices $i,j$ so that there is an edge between $t_i$ and $s_i$ in $T_i$, an edge between $t_j$ and $s_j$ in $T_j$, and for all $k \neq i,j$ we have $t_k=s_k$.

Diestel-Leader graphs arise as an answer to the question ``Is any connected, locally finite, vertex transitive graph quasi-isometric to the Cayley graph of a
finitely generated group?" which is often attributed to Woess.  Analogous Diestel-Leader graphs can be defined as subsets of products of trees with differing valences. When $d=2$, Eskin, Fisher and Whyte in \cite{EFW} show that the Diestel-Leader graph that is a subset of $T^{m+1} \times T^{n+1}$ is not quasi-isometric to the Cayley graph of any finitely generated group when $m \neq n$.  Diestel-Leader graphs which are subsets of $d \geq 3$ trees of differing valence are shown not to be Cayley graphs of finitely generated groups in \cite{BNW}.

\subsection{Diestel-Leader groups} Bartholdi, Neuhauser and Woess in \cite{BNW} construct a group of matrices whose Cayley graph with respect to a particular generating set is identified with the $1$-skeleton of the Diestel-Leader complex $DL_d(q)$.   We denote this group $\GGG$ and refer to it as a {\em Diestel-Leader group}.  As the standard lamplighter groups arise when $d=2$ we can view these Diestel-Leader groups as higher rank lamplighter groups.

The construction in \cite{BNW} is valid when $d-1 \leq p$ for all primes $p$ dividing $q$.  In particular, when $d=2$ or $d=3$, all values of $q$ are permissible.  When this condition is not satisfied, it is not known whether $DL_d(q)$ is the Cayley graph of a finitely generated group; the smallest open case is $DL_4(2)$. It is shown in Corollary 4.5 of \cite{BNW} that $\GGG$ is of type $F_{q-1}$ but not type $F_q$, hence if  $d>2$ these groups are finitely presented.

The matrix groups $\GGG$ are constructed as follows.  Let $\mathcal{L}_q$ be a commutative ring of order $q$ with multiplicative
unit 1, and suppose $\mathcal{L}_q$ contains distinct elements $l_1, \dots, l_{d-1}$ such that their pairwise differences are invertible.  Define a ring of polynomials in the formal variables $t$ and $(t+l_i)^{-1}$ for $1 \leq i \leq d-1$ with
finitely many nonzero coefficients lying in ${\mathcal L}_q$:
$${\mathcal R}_d({\mathcal L}_q) = {\mathcal L}_q[t,(t+l_1)^{-1},(t+l_2)^{-1}, \cdots ,(t+l_{d-1})^{-1}].$$

It is proven in \cite{BNW} that the group $\Gamma_d(q)$ of affine matrices of the
form
\begin{equation}\label{eqn:matrix}
\left( \begin{array}{cc} (t+l_1)^{k_1} \cdots (t+l_{d-1})^{k_{d-1}} & P \\ 0 & 1 \end{array} \right), \text{ with }
k_1,k_2, \cdots ,k_{d-1} \in \Z \text{ and }P \in {\mathcal R}_d({\mathcal L}_q)
\end{equation}
has Cayley graph $DL_d(q)$ with respect to the generating set $\Sigma_{d,q}$ consisting of $d$ types of generators:
\begin{itemize}
\item Type $S_i$ for $i=1,2, \cdots d-1$ consists of matrices of the form $\left( \begin{array}{cc} t+l_i & b \\ 0 & 1 \end{array} \right)^{\pm 1}$, $b$ is an element of the coefficient ring ${\mathcal L}_q$.
\item Type $S_d$ consists of matrices of the form $\left( \begin{array}{cc} \frac{t+l_i}{t+l_j} & \frac{-b}{t+l_j} \\ 0 & 1 \end{array} \right)^{\pm 1}$, for $ \ i,j \in \{1,2, \cdots ,d-1\}, \ i \neq j$ and $b \in {\mathcal L}_q$.
\end{itemize}

We refer the reader to \cite{AR,BNW} or \cite{STW} for a detailed description of the correspondence between the elements of $\GGG$ and the vertices of $DL_d(q)$.  Roughly this correspondence is as follows:
\begin{itemize}
\item The vector $(k_1,k_2, \cdots ,k_{d-1})$ of exponents arising from the upper left entry of the matrix $g$ in Equation \eqref{eqn:matrix} determines the heights of the coordinates of the vertex of $DL_d(q)$ corresponding to this matrix.  Namely, $g$ corresponds to a vertex $(t_1,t_2, \cdots ,t_{d-1},t_d) \in DL_d(q)$ where $h_i(t_i) = k_i$ for $i=1,2, \cdots ,d-1$ and $h_d(t_d) = -(k_1+k_2+ \cdots +k_{d-1})$.

    \smallskip

\item The polynomial $P$ in the upper right entry of the matrix in Equation \eqref{eqn:matrix} determines the specific vertex in each tree at the given height.
\end{itemize}
This correspondence allows us to view the variable $t+l_i$ as associated to the tree $T_i$, the $i$-th tree in the product, for $1 \leq i \leq d-1$.  The variable $t^{-1}$ is then associated with $T_d$.

The change in the upper left entry of any matrix representing a group element under multiplication by a generator from $\Sigma_{d,q}$ is clear.  In particular one can easily see that multiplication by a generator yields a vertex which differs in height from the original vertex in two trees: in one the height has been increased by $1$ and in one the height has been decreased by $1$.

Another way we can view these groups is as a semi-direct product.
$$\Gamma_d(q) = {\mathcal R}_d({\mathcal L}_q) \rtimes \Z^{d-1}$$
where $(k_1, \ldots, k_{d-1}) \in \Z^{d-1}$ acts on ${\mathcal R}_d({\mathcal L}_q)$ by
$$(k_1, \ldots, k_{d-1})\cdot P= P(t+l_1)^{k_1} \cdots (t+l_{d-1})^{k_{d-1}}. $$
If we let $\L_q((t))$ denote the ring of Laurent series with coefficients in $\L_q$ then we also have a discrete cocompact embedding
$${\mathcal R}_d({\mathcal L}_q) \to \prod_{i=1}^d\L_q((t))$$
given by the identifications of $\L_q((t+l_i)) \simeq \L_q((t))$ for $i=1, \ldots , d-1$ and $\L_q((t^{-1})) \simeq \L_q((t))$.
This gives a discrete and cocompact embedding of
$$\Gamma_d(q) ={\mathcal R}_d({\mathcal L}_q) \rtimes \Z^{d-1} \hookrightarrow \left(\prod_{i=1}^d\L_q((t))\right) \rtimes \Z^{d-1}.$$

We will use this point of view in Section \ref{proofsec}.

\subsection{Finite index subgroups of $\GGG$.}\label{finindsec} The proof of Theorem \ref{thm:main} relies on the existence of an index $k$ subgroup of $\GGG$.  We describe such a subgroup by considering group elements corresponding to vertices in $DL_d(q)$ for which the height of the first coordinate lies in $k\Z$.  We use the generating set $\Sigma_{d,q}$ given above to define this subgroup, which we denote $\Gk$.

Let $\Gk$ be the subgroup of $\GGG$ containing all matrices of the form
\begin{equation}\label{eqn:indexk}
\left( \begin{array}{cc} (t+l_1)^{ke_1} (t+l_2)^{e_2}\cdots (t+l_{d-1})^{e_{d-1}} & P \\ 0 & 1 \end{array} \right), \text{ with }
e_1,e_2, \cdots ,e_{d-1} \in \Z \text{ and }P \in {\mathcal R}_d({\mathcal L}_q).
\end{equation}
We claim that $\Gk$ is a subgroup of $\GGG$ of index $k$.

\begin{proposition}\label{prop:indexk}
The subgroup $\Gk$ containing the matrices listed above is a finitely generated subgroup of index $k$ in $\GGG$.
\end{proposition}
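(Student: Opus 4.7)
The plan is to realize $\Gk$ as the kernel of a surjective homomorphism $\phi\colon \GGG \to \Z/k\Z$, from which the index $k$ assertion (and normality) follows at once; finite generation will then be a general consequence of the finite index.

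First I would define $\phi$ as the composition $\GGG \xrightarrow{\pi_1} \Z \to \Z/k\Z$, where $\pi_1$ reads off the first exponent $k_1$ from the upper-left entry of the matrix in Equation~\eqref{eqn:matrix}. Equivalently, under the semidirect product decomposition $\GGG = {\mathcal R}_d({\mathcal L}_q) \rtimes \Z^{d-1}$ described in Section~\ref{sec:groups}, $\pi_1$ is the projection onto the first $\Z$-factor. That $\pi_1$ is a group homomorphism is immediate from block matrix multiplication: the upper-left entries of a product of two such matrices multiply, so the exponents of $t+l_1$ add, and the same calculation shows $\pi_1$ sends the inverse of the matrix in Equation~\eqref{eqn:matrix} to $-k_1$. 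The map $\phi$ is surjective because the generator in $S_1$ with upper-left entry $t+l_1$ has $\pi_1$-image $1$. By the very form of Equation~\eqref{eqn:indexk}, $\ker \phi = \Gk$, so $\Gk$ is a normal subgroup of index $k$ in $\GGG$.

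For finite generation I would invoke the standard fact that every finite index subgroup of a finitely generated group is finitely generated; since the full group $\GGG$ is finitely generated by $\Sigma_{d,q}$, this yields finite generation of $\Gk$ immediately. If a concrete generating set is wanted, the Reidemeister--Schreier procedure applied to $\Sigma_{d,q}$ with the Schreier transversal $\{g_1^j : 0 \le j < k\}$, where $g_1 \in S_1$ has upper-left entry $t+l_1$, produces an explicit finite generating set for $\Gk$.

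I do not expect any serious obstacle here. The only point to verify carefully is that $\pi_1$ is well-defined on all of $\GGG$ rather than just on positive words in $\Sigma_{d,q}$, but this is transparent from the uniqueness of the parametrization in Equation~\eqref{eqn:matrix}: the exponent vector $(k_1,\dots,k_{d-1})$ is read off from the upper-left entry, which is an intrinsic invariant of the matrix.
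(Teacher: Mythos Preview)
Your argument is correct and is essentially the same as the paper's: both identify the cosets of $\Gk$ by the residue of the exponent $k_1$ modulo $k$ (you via the homomorphism $\pi_1\bmod k$, the paper by directly exhibiting the coset representatives $\left(\begin{smallmatrix} t+l_1 & 0\\ 0 & 1\end{smallmatrix}\right)^i$ for $0\le i<k$), and both deduce finite generation from the finite index. The only cosmetic difference is that your formulation yields normality automatically, and the paper goes on to list an explicit generating set in order to describe the Cayley graph $DL_d^k(q)$.
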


\begin{proof}
It is clear that the set of matrices of this form is closed under multiplication and hence the $k$ cosets of $\Gk$ are $ \Gk \left( \begin{array}{cc} t+l_1 & 0 \\ 0 & 1 \end{array} \right)^i$ for $i=0,1,2, \cdots ,k-1$.

As $\Gk$ is finite index in $\GGG$ it is clearly finitely generated; to describe the correspondence between $\Gk$ and a particular subgraph of $DL_d(q)$ is is helpful to list the generators of $\Gk$, which are:
\begin{itemize}
\item products of the form
$$\prod_{i=1}^k \left( \begin{array}{cc} t+l_1 & b_i \\ 0 & 1 \end{array} \right)$$ and their inverses, where $b_i \in \L_q$, and
\item  products of the form
$$\prod_{i=1}^{k_1} \left( \begin{array}{cc} \frac{t+l_1}{t+l_{j_i}} & \frac{-b}{t+l_{j_i}} \\ 0 & 1 \end{array} \right)\prod_{i=1}^{k_2} \left( \begin{array}{cc} t+l_1 & b_i \\ 0 & 1 \end{array} \right)$$ and their inverses, where $k_1+k_2=k$, $j_i \neq 1$ and $b_i \in \L_q$.
\item All remaining generators of $\GGG$ which do not involve $t+l_1$.
\end{itemize}
\end{proof}

Note that the construction of $\Gk$ is completely symmetric in the first $d-1$ variables and will produce additional examples of index $k$ subgroups when $l_1$ is replaced by $l_j$. As the assignment of variables to trees is somewhat arbitrary we could permute the variables in other ways to create additional examples of finite index subgroups.

The finite index subgroups $\Gamma^k_d(q)$ we consider are not Diestel-Leader groups; however, their Cayley graphs (with respect to the given generating set) are closely related to the $1$-skeletons of Diestel-Leader complexes. The above description demonstrates that the Cayley graph of $\Gk$ ``sits inside" of the Cayley graph of $\GGG$  -- it contains all vertices of the Cayley graph of $\GGG$ in which the height of the first coordinate is an integral multiple of $k$.  We make this precise by defining a subgraph $DL^k_d(q)$ of $DL_d(q)$ whose vertices are a subset of $Vert(DL_d(q))$ but whose edges are unions of edges from the original graph.

Let $T_1=\bar{T}^{q^{k}+1}$ be the $q^{k}+1$ valent tree whose edges have length $k$; we can view this tree as being constructed by taking every $k$-th level of vertices from our standard tree $T^{q^{k}+1}$. The height function $h_1$ on this tree maps vertices to $k\Z$. Let $T_i=T^{q+1}$ for $i=2, \ldots d$ with height functions $h_i$ as before.
Define
$$Vert(DL^k_d(q)) = \left\{ (x_1, \ldots, x_d)  \mid x_1 \in Vert(\bar{T}^{q^k+1}), \ x_i \in Vert(T^{q+1}),  \ 2 \leq i \leq d, \  \sum_{i=1}^d h_i(x_i) = 0\right\}.$$

Edges in $DL^k_d(q)$ have one of two forms; suppose that $(s_1,s_2, \cdots ,s_d),(t_1,t_2, \cdots ,t_d) \in Vert(DL^k_d(q))$ differ by an edge in $DL^k_d(q)$.  Then either:
\begin{itemize}
\item there are indices $i,j \in \{1,2, \cdots ,d\}$ so that $s_i$ and $t_i$ are connected by a single edge in $T_i$, and $s_j$ and $t_j$ are connected by a single edge in $T_j$.  These edges are edges in $DL_d(q)$ as well.
\item the vertices $s_1$ and $t_1$ are connected by an edge of length $k$ in $T_1$ and there is a collection of indices $i_1,i_2, \cdots i_r$ so that $s_{i_l}$ and $t_{i_l}$ are connected in $T_{i_l}$ by a path of $q_l$ edges, and $q_1+q_2+ \cdots +q_r = k$.  Together this is a compilation of $k$ edges from $DL_d(q)$.
\end{itemize}

The one skeleton of $DL^k_d(q)$ is the Cayley graph of the index $k$ subgroup $\Gamma^k_d(q)$ of $\Gamma_d(q)$.

\begin{figure}[h]
\setlength{\unitlength}{.1in}
\center
\begin{picture}(20,8)(1,.5)
\linethickness{1pt}
\newsavebox{\treeone}
\savebox{\treeone}(-4,8){
\put(-4,1){\line(2,3){4}}
\put(-3,1){\line(1,2){3}}
\put(-2,1){\line(1,3){2}}

\put(-1,1){\line(1,6){1}}
\put(0,1){\line(0,6){6}}
\put(1,1){\line(-1,6){1}}

\put(2,1){\line(-1,3){2}}
\put(3,1){\line(-1,2){3}}
\put(4,1){\line(-2,3){4}}
}

%
%

\newsavebox{\treetwo}
\savebox{\treetwo}(11,8){
\put(11,1){\line(2,3){4}}
\put(12,1){\line(1,3){1}}
\put(13,1){\line(0,1){3}}

\put(14,1){\line(1,3){1}}
\put(15,1){\line(0,6){6}}
\put(16,1){\line(-1,3){1}}

\put(17,1){\line(0,1){3}}
\put(18,1){\line(-1,3){1}}
\put(19,1){\line(-2,3){4}}
}

\put(-4,-1){\usebox{\treeone}}
\put(-16,-1){\usebox{\treetwo}}
\put(-6,-1){\usebox{\treetwo}}
\put(4,-1){\usebox{\treetwo}}

%
%
%
\put(30,4){\vector(1,0){3}}
\put(35,1){\line(0,1){6}}
\put(36,1){\makebox(0,0){$2$}}
\put(36,4){\makebox(0,0){$1$}}
\put(36,7){\makebox(0,0){$0$}}

\end{picture}
\caption{ $DL^2_4(3)$ is a subset of the product of these trees.}
\end{figure}
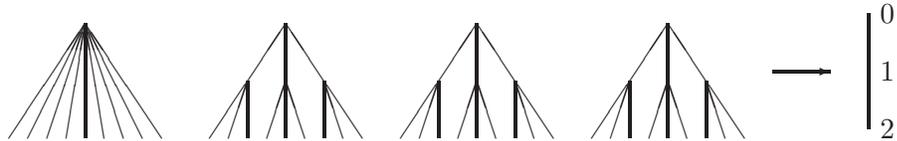

With this description of the geometry of $DL_d^k(q)$ we can give a brief sketch of the proof of Theorem \ref{thm:main}.

{\em Sketch of the proof of Theorem \ref{thm:main}.} First we suppose there is a bijective quasi-isometry $\phi: DL_d^k(q) \to DL_d(q)$. Proposition \ref{ktoone} below then guarantees an induced $k$-to-$1$ quasi-isometry
$$\Phi: DL_d^k(q) \to DL_d^k(q)$$
 (i.e. $|\Phi^{-1}(v)|=k$ for each vertex $v$).  In Section \ref{proofsec} we show that this is impossible if $k$ is not a product of prime factors appearing in $q$.

\section{Boxes and F\o lner sets}\label{boxsec}

In order to define a $k$-to-$1$ map from $DL_d^k(q)$ to itself, we further describe the geometry of the graphs $DL_d^k(q)$.
Note that when $k=1$ we are considering the original Diestel-Leader complex $DL_d(q)$ so this case is covered as well.

First we define a family of sets which we call \emph{boxes}.
Our $k$-to-$1$ map is initially defined on a box of a fixed size. Next we show that $DL_d^k(q)$ can be tiled by a disjoint union of these boxes which allows us to extend the map to the whole of $DL_d^k(q)$. Finally we show that if we take a sequence of boxes of increasing size we obtain a F\o lner sequence (see Definition \ref{Folnerdefn}.)
This is necessary for the results in Section \ref{proofsec}.

{\bf Boxes.} For any $k \in \N$ there is a natural ``height''  map $\rho: Vert(DL^k_d(q)) \to k \Z \times \Z^{d-2}$ given by
$$ \rho(x_1, \ldots, x_d)= (h_1(x_1), \ldots, h_{d-1}(x_{d-1})).$$
A {\em box} will be a connected component of the inverse image of this map. More precisely:

\begin{definition}\label{boxdefn}Let $V_h^k= \prod [a_i, b_i]$ be a subset of $ k\Z \times \Z^{d-2}$ with $|b_i-a_i|=h$ for all $i$.  Define a {\em box} $B_h^k \subset DL_d^k(q)$ to be a connected component of $\rho^{-1}(V_h)$.
\end{definition}

An alternate description of $B_h^k$ is as follows: For each $T_i$ where $i\leq d-1$ take a connected component of $h_i^{-1} [a_i,b_i]$ and for $T_d$  take a connected component of $h_d^{-1}[-\sum b_i,-\sum a_i]$. Then $B_h^k$ is the product of these tree components restricted to $DL_d^k(q)$. To simplify calculations we will assume that $h$ is a multiple of $k$.

\begin{lemma} \label{lemma:counting-vertices} Let $v \in V_h^k$.  Then $\rho^{-1}(v)\cap B_h^k$ contains exactly $q^{(d-1)h}$ vertices.
\end{lemma}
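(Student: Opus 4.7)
The plan is to use the alternate description of $B_h^k$ as $(C_1 \times \cdots \times C_d) \cap DL_d^k(q)$, where each $C_i$ is a specific connected component of the relevant level set in the tree $T_i$. Fix $v = (v_1,\ldots,v_{d-1}) \in V_h^k$. The Diestel--Leader sum condition forces the $d$-th coordinate of any point of $\rho^{-1}(v) \cap B_h^k$ to have height $-\sum_{i=1}^{d-1} v_i$, so the count decouples completely into a product over the $d$ trees:
\[
|\rho^{-1}(v) \cap B_h^k| = \prod_{i=1}^d N_i,
\]
where $N_i$ is the number of vertices of $C_i$ at the prescribed height.

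The main work is the following tree-geometric claim: a connected component $C$ of $h_i^{-1}[\alpha,\beta]$ in any $T_i$ contains exactly one vertex at the minimum height $\alpha$, from which it branches upward $q$-fold at every subsequent level (respectively $q^k$-fold per height step of $k$ in the scaled tree $T_1 = \bar{T}^{q^k+1}$). Uniqueness of the bottom vertex follows from the orientation convention that every vertex has a \emph{unique} neighbor of strictly lower height: two distinct vertices of $C$ at height $\alpha$ would be joined in $T_i$ by a path passing through their common lower ancestor at some height $<\alpha$, which lies outside the range. The uniform upward branching is then immediate, because every vertex of $C$ at height $h<\beta$ has all of its higher neighbors still inside the range and hence inside $C$.

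Specializing this claim gives $N_i = q^{v_i - a_i}$ for $1 \leq i \leq d-1$, where for the scaled tree $T_1$ one uses $(q^k)^{(v_1-a_1)/k} = q^{v_1 - a_1}$ together with $v_1 - a_1 \in k\Z$. For $i = d$ the relevant interval is $[-\sum b_j, -\sum a_j]$, whose minimum is $-\sum b_j$, so $N_d = q^{-\sum v_j - (-\sum b_j)} = q^{\sum_{j=1}^{d-1}(b_j - v_j)}$. Multiplying and collapsing the telescoping exponent,
\[
|\rho^{-1}(v) \cap B_h^k| = \prod_{i=1}^{d-1} q^{v_i - a_i} \cdot q^{\sum_{j=1}^{d-1}(b_j - v_j)} = q^{\sum_{i=1}^{d-1}(b_i - a_i)} = q^{(d-1)h}.
\]

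The technical crux is the tree-geometric claim; once it is in hand, the exponent arithmetic is routine and, crucially, the contributions from the first $d-1$ trees are algebraically complementary to the contribution from $T_d$, so the count is independent of the specific $v \in V_h^k$ chosen. This uniformity is precisely what is needed to see $B_h^k$ as a ``rectangular'' tile and to set up the F\o lner arguments in the sections that follow.
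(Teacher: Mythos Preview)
Your proof is correct and follows essentially the same approach as the paper: decompose $\rho^{-1}(v)\cap B_h^k$ as a product over the $d$ tree factors, count the vertices in each $C_i$ at the prescribed height using the upward $q$-fold branching from the unique bottom vertex, and observe that the exponents telescope to $(d-1)h$. Your write-up is in fact a bit more careful than the paper's (you justify the tree-geometric claim and explicitly handle the scaled tree $T_1=\bar T^{q^k+1}$), but the argument is the same.
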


\begin{proof} First note that $B_h^k \cap T_d$ is a subtree of height
$$ -\sum_{i=1}^{d-1} a_i + \sum_{i=1}^{d-1} b_i= (d-1)h$$
and hence contains $q^{(d-1)h}$ vertices at its maximal height.  Thus $\rho^{-1}(a_1,a_2, \cdots ,a_{d-1})\cap B_h^k$ contains $q^{(d-1)h}$ vertices, as each $T_i$ for $1 \leq i \leq d-1$ has a unique vertex in $B_h^k \cap T_i$ at height $a_i$.  We now show that the preimage of any point in $V_h^k$ contains this same number of vertices.

Choose any point $v=(b_1-r_1,b_2-r_2, \cdots ,b_{d-1}-r_{d-1}) \in V_h^k$, where $r_1 \in k\Z$ and $0 \leq r_i \leq h$.  For any $1 \leq i \leq d-1$ the set $\rho^{-1}(v)\cap B_h \cap T_i$ contains
$$q^{b_i-r_i-a_i}$$
vertices.  The height of the $d$-th coordinate of any vertex in $\rho^{-1}(v) \cap B_h^k$ must be $$- \sum_{i=1}^{d-1} b_i + \sum_{i=1}^{d-1} r_i$$ and there are $q^{-\alpha}$ possible vertices in $B_h^k \cap T_d$ at this height, where
$$\alpha = \sum_{i=1}^{d-1} b_i + \sum_{i=1}^{d-1} r_i + \sum_{i=1}^{d-1} b_i = \sum_{i=1}^{d-1} r_i.$$
Thus there are $q^{-\beta}$ vertices in $\rho^{-1}(v)\cap B_h^k$, where
$$\beta=\sum_{i=1}^{d-1} b_i-\sum_{i=1}^{d-1} r_i-\sum_{i=1}^{d-1} a_i +\sum_{i=1}^{d-1} r_i = \sum_{i=1}^{d-1} b_i-\sum_{i=1}^{d-1} a_i = (d-1)h.$$
\end{proof}

We now show that these boxes constitute a F\o lner sequence in $DL_d^k(q)$.

\begin{definition}\label{Folnerdefn}
A F\o lner sequence $F_i$ in a discrete space $X$ is a collection of finite sets with the property that for each $r>0$
$$\lim_{i \rightarrow \infty} \frac{|\partial_r F_i|}{|F_i|} \rightarrow 0.$$
where $\partial_r S$ is the set of all points in $S$ that are distance at most $r$  from  $X\setminus S$.
\end{definition}
The existence of a F\o lner sequence is a defining property of an amenable group.

\begin{lemma} The boxes $B_h$ define a F\o lner sequence. \end{lemma}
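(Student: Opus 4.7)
The plan is to show that $|\partial_r B_h^k|/|B_h^k| \to 0$ as $h \to \infty$ (through multiples of $k$) by exploiting the height projection $\rho: Vert(DL_d^k(q)) \to k\Z \times \Z^{d-2}$ from the definition of boxes. The denominator is immediate from Lemma~\ref{lemma:counting-vertices}: every fiber of $\rho$ over a point of $V_h^k$ contains exactly $q^{(d-1)h}$ vertices of $B_h^k$, so $|B_h^k| = |V_h^k| \cdot q^{(d-1)h}$, with $|V_h^k| = (h/k+1)(h+1)^{d-2}$ of order $h^{d-1}$.

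For the numerator, the key geometric claim I would establish is that any $x \in \partial_r B_h^k$ has $\rho(x) \in V_h^k$ within $\ell^\infty$-distance $rk$ of the complement of $V_h^k$. This rests on two observations. First, every edge of $DL_d^k(q)$ changes $\rho$ by at most $k$ in $\ell^\infty$-norm: a standard edge moves two tree coordinates by $\pm 1$, while a long $T_1$-edge changes $h_1$ by $\pm k$ and distributes at most $k$ compensating edge-steps among the remaining coordinates, so each coordinate of $\rho$ moves by at most $k$. Second, because $B_h^k$ is a connected component of $\rho^{-1}(V_h^k)$, no edge of $DL_d^k(q)$ can join a vertex in $B_h^k$ to a vertex in $\rho^{-1}(V_h^k) \setminus B_h^k$, since such an edge would lie in the induced subgraph on $\rho^{-1}(V_h^k)$ and certify that its endpoints lie in the same component. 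Hence any path from $B_h^k$ to its complement must pass through a vertex whose $\rho$-image lies outside $V_h^k$, and applying the Lipschitz bound to a witnessing path of length $\leq r$ gives the claim.

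The conclusion is then a shell count. The set of $v \in V_h^k$ within $\ell^\infty$-distance $rk$ of the complement of $V_h^k$ is the boundary shell of the $(d-1)$-dimensional lattice box $V_h^k$ of side $\asymp h$, whose cardinality is $O(h^{d-2})$ for fixed $r,k,d$. Multiplying by the uniform fiber size $q^{(d-1)h}$ yields $|\partial_r B_h^k| \leq O(h^{d-2}) \cdot q^{(d-1)h}$, so $|\partial_r B_h^k|/|B_h^k| = O(1/h) \to 0$ as required.

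The step I expect to be the main obstacle is making the connected-component observation fully precise and uniform across both edge types in $DL_d^k(q)$, since the long edges bundle together a length-$k$ move in $T_1$ with up to $k$ compensating edges distributed among the other trees. The accompanying $\ell^\infty$-Lipschitz estimate for those long edges also deserves careful verification, but both checks should be routine once the edge definitions are unpacked.
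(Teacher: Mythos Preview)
Your proposal is correct and follows the same strategy as the paper: use the uniform fiber count from Lemma~\ref{lemma:counting-vertices} to write $|B_h^k|=|V_h^k|\,q^{(d-1)h}$, then argue that $\rho(\partial_r B_h^k)$ lands in a boundary shell of $V_h^k$ and invoke the fact that the rectangles $V_h^k$ form a F\o lner sequence in $k\Z\times\Z^{d-2}$. The paper simply asserts $\partial_r B_h=\{x\in B_h:\rho(x)\in\partial_r V_h\}$ without further comment, whereas you supply the connected-component and $\ell^\infty$-Lipschitz justifications (with the correct factor of $k$ coming from the long $T_1$-edges); this extra care is warranted and does not change the argument.
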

\begin{proof}
Since $\partial_r S$ is all points $s \in S$ with $dist(s, X\setminus S) \leq r$ then $\partial_r B_h$ is the set of all $x\in B_h$ with $\rho(x) \in \partial_r V_h$. It follows from Lemma \ref{lemma:counting-vertices} that
$$ |B_h| = |V_h| q^{(d-1)h}$$
and
$$ |\partial_r B_h|= |\partial_r V_h| q^{(d-1)h}$$
Since $V_h$ is a F\o lner sequence in $k\Z \times \Z^{d-2}$ the lemma follows.
\end{proof}

We now use these boxes in $DL_d^k(q)$ to alter a bijective quasi-isometry $\varphi: DL_d^k(q) \to DL_d(q)$ into a $k$-to-$1$ quasi-isometry $$\Phi: DL_d^k(q) \to i(DL_d^k(q)) \simeq DL_d^k(q)$$ which will be used in the proof of Theorem \ref{thm:main}.

\begin{prop}\label{ktoone} Suppose $\varphi: DL_d^k(q) \to DL_d(q)$ is a bijective quasi-isometry. Then by modifying $\varphi$ a bounded amount we get an induced $k$-to-$1$ quasi-isometry $\Phi: DL_d^k(q) \to i(DL_d^k(q)) \simeq DL_d^k(q)$.
\end{prop}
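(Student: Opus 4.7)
The strategy is to post-compose $\varphi$ with a $k$-to-$1$ bounded-distance projection
$$\pi: Vert(DL_d(q)) \longrightarrow Vert(i(DL_d^k(q)))$$
and set $\Phi := \pi \circ \varphi$. Since $\varphi$ is bijective, each fiber of $\Phi$ is the $\varphi$-preimage of a fiber of $\pi$, so $|\Phi^{-1}(w)| = |\pi^{-1}(w)| = k$; and since $\pi$ is at bounded distance from the inclusion $i$ while $\varphi$ is a quasi-isometry, $\Phi$ is also a quasi-isometry, with constants perturbed only additively. All the content lies in constructing $\pi$.

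The density calculation from Lemma \ref{lemma:counting-vertices} makes such a $\pi$ plausible: for corresponding boxes $B_h \subset DL_d(q)$ and $B_h^k \subset i(DL_d^k(q))$ with $h$ a multiple of $k$,
$$\frac{|B_h|}{|B_h^k|} \;=\; \frac{(h+1)^{d-1}}{(h/k + 1)(h+1)^{d-2}} \;\longrightarrow\; k \qquad \text{as } h \to \infty,$$
so $i(DL_d^k(q))$ has relative density $1/k$ inside $DL_d(q)$. To promote this asymptotic ratio to an exact $k$-to-$1$ bounded-distance map I would set up an infinite Hall marriage argument on the bipartite graph joining $v \in Vert(DL_d(q))$ to $w \in Vert(i(DL_d^k(q)))$ whenever $d_{DL_d(q)}(v,w) \leq C$, for a constant $C$ larger than the diameter of a fundamental box; such a $k$-to-$1$ projection $\pi$ corresponds precisely to a perfect matching from $Vert(DL_d(q))$ into the $k$-fold duplication of $Vert(i(DL_d^k(q)))$ in this bipartite graph.

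The main obstacle is verifying Hall's defect condition: for every finite $S \subseteq Vert(i(DL_d^k(q)))$, the $C$-neighborhood of $S$ in $Vert(DL_d(q))$ must have at least $k|S|$ vertices. Covering $S$ by finitely many translates of fundamental boxes and applying the box counts above reduces this to a boundary estimate which is absorbed by the F\o lner property established in this section. In the special case when $DL_d(q)$ arises as the Cayley graph of $\Gamma_d(q)$, a much more transparent $\pi$ is immediate from Proposition \ref{prop:indexk}: the $k$ coset representatives of $\Gamma^k_d(q)$ in $\Gamma_d(q)$ all have word length at most $k-1$, so projecting each group element to its coset is exactly $k$-to-$1$ and moves each vertex by at most $k-1$; this direct construction suffices for all applications to Corollary \ref{cor:mainthm}.
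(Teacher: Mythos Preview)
Your overall strategy---post-compose $\varphi$ with a $k$-to-$1$ map $u:DL_d(q)\to i(DL_d^k(q))$ at bounded distance from the identity and set $\Phi=u\circ\varphi$---is exactly what the paper does. The difference lies entirely in how $u$ is built.

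The paper gives a direct, elementary construction that you came within a step of but did not use. Take a box $B^1_k\subset DL_d(q)$ with $a_1\in k\Z$. By Lemma~\ref{lemma:counting-vertices}, the fibre $\rho^{-1}(v)\cap B^1_k$ has the \emph{same} cardinality $q^{(d-1)k}$ for every $v=(x_1,\dots,x_{d-1})\in V^1_k$. So one simply sends $\rho^{-1}(v)\cap B^1_k$ bijectively onto $\rho^{-1}(\bar v)\cap B^1_k$, where $\bar v=(k\lfloor x_1/k\rfloor,x_2,\dots,x_{d-1})$. Exactly $k$ values of $v$ share the same $\bar v$, so this is $k$-to-$1$; every point stays in the same box of side $k$, so it moves a bounded amount. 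Tiling $DL_d(q)$ by translates of $B^1_k$ extends $u$ globally. No matching theorem is needed: the constant-fibre-size fact you already invoked in your density computation does all the work.

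Your Hall marriage route is in principle viable, but as written it has a gap. The F{\o}lner property and your box count give only the asymptotic density ratio $k$; Hall's condition demands the sharp inequality $|N_C(S)|\geq k|S|$ for \emph{every} finite $S$, not merely for large boxes up to boundary error. Filling this in would require either an explicit combinatorial argument (at which point one has essentially rediscovered the paper's construction) or an appeal to a general theorem on wobbling equivalence in amenable spaces---machinery heavier than the one-paragraph explicit map above. Your final remark about coset projection in the group case is correct and is really a special instance of the paper's construction, since the coset representatives $\left(\begin{smallmatrix} t+l_1 & 0\\ 0 & 1\end{smallmatrix}\right)^i$ shift precisely the first height coordinate.
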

\begin{proof} We will show that there is a $k$-to-$1$ map $u:DL_d(q) \to DL_d^k(q)$ which is a bounded distance from the identity. Then we set $\Phi= u \circ \varphi$.

Note that the vertices of $i(DL_d^k(q))$ are exactly those vertices of $DL_d(q)$ where the height of the $T_1$ coordinate is a multiple of $k$, that is, $\rho(i(DL_d^k(q))) = k\Z \times \Z^{d-2}$.

Let  $a_1$ be a multiple of $k$ and let $h=k$. Lemma \ref{lemma:counting-vertices} ensures that for any $v \in V_h^1$, the set $\rho^{-1}(v)\cap B_h^1 \subset DL_d(q)$ contains a constant number of vertices.
For $v=(x_1, \ldots, x_{d-1}) \in V_h^1$  define
$$\bar{v}=(k\lfloor x_1/k \rfloor, x_2, \ldots, x_{d-1}) \in  k \Z \times \Z^{d-2}$$
and map the vertices in $\rho^{-1}(v)\cap B_h^1$ bijectively to $\rho^{-1}(\bar{v})\cap B_h^1$.
Note also that $DL_d(q)$ can be tiled by boxes that are copies of $B_h^1$ so that we can extend the definition of this map to all of $DL_d(q)$.
This map is $k$-to-$1$ and sends $DL_d(q)$ to $i(DL_d^k(q))$. As vertices in $B_h^1$ are all mapped to vertices in $B_h^1$, all vertices are moved a uniformly bounded amount and hence this map is within a bounded distance of the identity map on this box.
\end{proof}

\section{Boundaries and quasi-isometries}\label{proofsec}

In this section we embed $DL^k_d(q)$ into
$$\prod_{i=1}^d \left(\L_q((t)) \rtimes_\alpha \Z \right)$$
and use this embedding to better understand its quasi-isometries.

\subsection{Laurent series $\L_q((t))$.} In this section we require  only that $\L_q$ is a group of order $q$. Then $\L_q((t))$  denotes the set of Laurent series with coefficients in $\L_q$.  While the construction of the group $\GGG$ requires that $\L_q$ be a ring with specific properties, the result in Theorem \ref{thm:main} is a more general statement for Diestel-Leader graphs, and applies to all $DL_d(q)$ regardless of the choice of parameters, that is, even to those Diestel-Leader graphs which are not the Cayley graphs of finitely generated groups using the construction from \cite{BNW}.

For each $\xi=\sum b_i t^i \in  \L_q((t))$ we define the \emph{clone} of size $q^{-n}$ containing $\xi$ by
$$ C_{\xi,n}= \left\{ \sum a_i t^i  \mid  a_i = b_i \textrm{ for } i \leq n \right\}$$
Note that $\L_q((t))$ has a natural metric space structure where length is given by $|\sum_{i=n}^\infty a_i t^i| = q^{-n}$.  The associated Hausdorff measure will be denoted by $\mu$. Note that
$$\mu(C_{\xi,n})=diam(C_{\xi,n})= q^{-n}.$$

Let $ \alpha: \L_q((t)) \to \L_q((t))$ be the automorphism defined by
$$ \alpha( \sum a_i q^i ) = \sum a_{i-1} q^i.$$
Then $\alpha$ is a \emph{contraction} in that for each $\xi \in \L_q((t))$ we have that $|\alpha^n(C_{\xi,n})| \to 0$ (and $diam(\alpha^n(C_{\xi,n})) \to 0$) as $n \to \infty$.

\subsection{Relation between Laurent series and trees.} The space $\L_q((t)) \rtimes_\alpha \Z$ is roughly isometric (i.e. $(1,C)$-quasi-isometric) to the tree $T^{q+1}$; this is described in more detail, for example, in \cite{CCMT} and \cite{Dy}.

The rough isometry
$$\pi:  \L_q((t)) \rtimes_\alpha \Z \to T^{q+1}$$
is determined in the natural way by the standard identification of $ \L_q((t))$ with the space of vertical geodesics in $T^{q+1}$ (see \cite{Dy} or \cite{EFW} for more details). This identification maps each set of the form $C_{\xi,n} \times \{n\}$ to a single vertex in $T^{q+1}$.   In fact it induces a bijective correspondence between these sets and vertices in the tree.

Similarly, $\pi$ induces a rough isometry
$$ {\pi}_k:  \L_q((t)) \rtimes_\alpha k\Z \to \bar{T}^{q^k+1}.$$

Alternatively, if we rescale the metric on $\Z$ by $k$ then we have a rough isometry
$$ {\pi}_k:  \L_{q^k}((t)) \rtimes_\alpha \Z \to \bar{T}^{q^k+1}.$$
where now $\alpha$ is the standard contraction on $\L_{q^k}((t))$.
In all cases projection to the $\Z$ coordinate corresponds to the height map.

{\bf Remark.} The space of vertical geodesics has been called the
\emph{boundary} of the tree $T^{q+1}$.
In previous literature, $\Q_q$ has been used to denote the boundary of a tree instead of $\L_q((t))$ but here it is more natural to use $\L_q((t))$.
As metric spaces, $\L_q((t))$ and $\Q_q$ are identical.

\subsection{Relation between Laurent series and Diestel-Leader graphs.}
When $d=2$ and $\Gamma_2(q)$ is the lamplighter group $F \wr \Z$ where $|F|=q$, we have cocompact discrete embeddings
$$\Gamma_2(q) \to  (\L_q((t)) \oplus  \L_q((t))) \rtimes \Z$$
where the action of $\Z$ on  $\L_q((t)) \oplus  \L_q((t))$ is given by $(\alpha,\alpha^{-1})$. See, for example, \cite{Shalom}.

In Section 6 of \cite{Dy} the metric on  $\left(\L_q((t)) \oplus  \L_q((t))\right) \rtimes \Z$ is described. The quasi-isometry
$$ \pi: \left(\L_q((t)) \oplus  \L_q((t))\right) \rtimes \Z \to DL_2(q)$$
is also explicitly constructed; to define this map, sets of the form $C_{\xi,n} \times C_{\zeta,-n} \times \{n\}$ are collapsed to vertices of $DL_2(q)$. This correspondence is again a bijection.

Note that we have a (quasi-isometric) embedding

\begin{equation}\label{eqn:semidirect}
\left(\L_q((t)) \oplus  \L_q((t))\right) \rtimes \Z  \hookrightarrow  ( \L_q((t)) \rtimes_\alpha \Z )^2 \simeq T^{q+1} \times T^{q+1}
\end{equation}
where $(\eta, \xi, t)$ is sent to $((\eta, t) , (\xi , -t))$. That is, the sum of the heights of the image points is always zero.

We refer to the two copies of $\L_q((t))$ in Equation \eqref{eqn:semidirect} as the boundaries of $DL_2(q)$.

\subsection{A higher rank analogue of boundary.}\label{higherrankqi}
In analogy to the previous examples we consider the group
$$G=\left( \prod_{i=1}^d  \L_q((t)) \right) \rtimes \Z^{d-1}$$
where $(t_1, \ldots , t_{d-1}) \in \Z^{d-1}$ acts by $( \alpha^{t_1}, \ldots, \alpha^{t_{d-1}}, \alpha^{-(t_1+ \cdots + t_{d-1})})$.

As before we have a quasi-isometric embedding
$$G=\left( \prod_{i=1}^d  \L_q((t)) \right) \rtimes \Z^{d-1} \hookrightarrow
\prod_{i=1}^d \left( \L_q((t)) \rtimes_\alpha \Z\right) \simeq \prod_{i=1}^d T_{q+1}$$
where $$((\xi_1, \ldots , \xi_d), (t_1, \ldots , t_{d-1}))\mapsto ((\xi_1, t_1), \ldots, (\xi_{d-1}, t_{d-1}), (\xi_d, -(t_1+ \cdots + t_{d-1}))).$$
Note that $G$ embeds as the set of all points whose heights sum to zero so that we can identify $G$ with $DL_d(q)$.
The quasi-isometry

$$\pi: \left( \prod_{i=1}^d  \L_q((t)) \right) \rtimes \Z^{d-1} \to  DL_d(q) $$

takes sets of the form $\left(\prod_{i=1}^d C_{\xi_i,t_i}\right) \times \{(t_1, \ldots, t_{d-1})\}$
where  $t_d=-(t_1 + \cdots + t_{d-1})$ and collapses them to vertices of $DL_d(q)$. Again, the correspondence is a bijection between these sets and vertices of $DL_d(q)$. We call the $d$ copies of $\L_q((t))$ the boundaries of $DL_d(q)$.

A natural corollary to our previous statements is that we obtain a quasi-isometry

$$\pi_k: \left( \prod_{i=1}^d  \L_q((t)) \right) \rtimes (k\Z \times \Z^{d-2}) \to  DL_d^k(q) $$

and a bijective correspondence between vertices of $DL_d^k(q)$ and sets of the form $$\left(\prod_{i=1}^d C_{\xi_i,t_i}\right) \times \{(t_1, \ldots, t_{d-1})\}$$ where now $t_1$ is a multiple of $k$.

\subsection{From boundary maps to interior maps.}.

Let $Bilip(\L_q((t)))$ denote all bilipschitz maps $\phi:\L_q((t))\to\L_q((t))$.
Given $d$ bilipschitz maps $\phi_i\in Bilip(\L_q((t)) )$ we can construct a quasi-isometry $\Psi: DL^k_d(q) \to DL^k_d(q)$ by setting
 $$ \Psi = \pi_k \circ \phi_1 \times \cdots \times \phi_d \times id \circ \bar{\pi}_k$$
where $\bar{\pi}_k$ is a coarse inverse of $\pi_k$.

Work of the second author, which generalizes the analogous results for Diestel-Leader graphs $DL_2(q)$ in \cite{EFW}, shows that all quasi-isometries of $DL_d(q)$ are a bounded distance from a function of the above form.  Namely:

\begin{theorem}[Peng]\label{pengthm} Any $(K,C)$ quasi-isometry $\Phi: DL_d(q) \to DL_d(q)$ is bounded distance from a map of the form
$$\pi \circ \phi_1 \times \cdots \phi_d \times id \circ \bar{\pi}$$
where $\phi_i \in Bilip(\L_q((t)))$ and $\pi$ and $\bar{\pi}$ are as above, up to permuting the $\L_q((t))$ factors.
\end{theorem}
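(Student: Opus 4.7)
The plan is to adapt the coarse differentiation and boundary-analysis framework pioneered by Eskin--Fisher--Whyte for $DL_2(q)$ to the higher-rank setting. Using the embedding of Section~\ref{higherrankqi}, I would regard $DL_d(q)$ as a horocyclic product of $d$ trees: the height map $\vec h=(h_1,\ldots,h_{d-1})$ with $h_d=-\sum_{i<d}h_i$ fibers $DL_d(q)$ over a lattice in the hyperplane $\{\sum h_i=0\}\subset\R^d$, and each fiber is a product of clones $\prod_{i=1}^d C_{\xi_i,h_i}$. Any candidate map of the form $\pi\circ(\phi_1\times\cdots\times\phi_d\times\mathrm{id})\circ\bar\pi$ is manifestly height-preserving and acts factor-wise on the $d$ boundaries; the content of the theorem is that every quasi-isometry has this structure, up to bounded distance and a permutation $\sigma\in S_d$ of the tree factors.

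Given a $(K,C)$-quasi-isometry $\Phi$, the first key step is to establish that $\Phi$ is \emph{height respecting}: it coarsely sends height-flats to height-flats, and its induced action on the hyperplane $\{\sum h_i=0\}$ is (the restriction of) a coordinate permutation $\sigma\in S_d$. I would do this via coarse differentiation on the F\o lner boxes $B_h^1$ of Section~\ref{boxsec}: on a positive-density set of scales, $\Phi$ restricted to a sub-box is $\epsilon$-close in the sup norm to a standard model preserving the horocyclic structure. The $d$ asymptotic tree directions are intrinsically distinguishable (each corresponds to a rank-one direction of exponential divergence of horospheres), so any QI must permute them, which forces a $\sigma\in S_d$.

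With height respecting behavior in hand, each boundary factor $\L_q((t))$ inherits a well-defined induced map $\phi_i$, obtained by tracking the images of vertical geodesic clones $C_{\xi,n}$. Since a clone $C_{\xi,n}$ is an ultrametric ball of diameter $q^{-n}$ and $\Phi$ has uniformly bounded multiplicative and additive distortion, $\phi_i$ sends clones to clones of comparable diameter at every scale; the contraction $\alpha$ propagates the comparability to arbitrarily fine scales, so $\phi_i$ is bilipschitz (not merely quasi-symmetric), with constant depending only on $(K,C)$. Finally, since both $\Phi$ and $\pi\circ(\phi_1\times\cdots\times\phi_d\times\mathrm{id})\circ\bar\pi$ fiber over the same permutation $\sigma$ and induce the same boundary maps on each factor, the bijective correspondence between vertices of $DL_d(q)$ and product-of-clone sets of Section~\ref{higherrankqi} forces them to agree modulo bounded error.

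I expect the main obstacle to be the higher-rank coarse differentiation step. In the rank-one case $d=2$, the averaging argument of EFW over horocyclic lines suffices, and one gets height respecting up to a single sign flip. For $d\geq 3$ the ``flats'' $V_h\subset\R^{d-1}$ are genuinely higher-dimensional, so one must control $\Phi$ simultaneously along all Weyl-chamber directions and rule out, on a positive density of scales and a positive fraction of each flat, any ``shearing'' that mixes distinct boundary factors. This is precisely where the second author's prior work on quasi-isometries of higher-rank solvable groups would be invoked, and it is the heart of the argument; the remaining reconstruction of $\Phi$ from its boundary data is then a routine packaging of the height respecting and bilipschitz statements.
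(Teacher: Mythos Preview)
Your proposal is correct and follows essentially the same approach as the paper: both defer the substantive work to the second author's coarse-differentiation machinery for higher-rank abelian-by-abelian groups, and your outline (height-respecting via coarse differentiation on boxes, permutation of the $d$ tree factors, extraction of bilipschitz boundary maps, reconstruction up to bounded error) matches the strategy sketched in the paper's appendix. The only cosmetic difference is vocabulary: the appendix organizes the argument around \emph{quadrilaterals} and \emph{blocks} (the higher-rank analogues of the horocycle level sets you allude to when you speak of ruling out ``shearing'' between factors), whereas you phrase the same steps in terms of height-respecting and clone-to-clone behavior; these are two descriptions of the same proof.
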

\begin{proof} This proof fits into the context  of Peng's work on the structure of quasi-isometries of higher-rank solvable Lie groups. For a brief sketch of this work please see the appendix.
\end{proof}

If $\Phi \in QI(DL^k_d(q))$, we call the maps $\phi_i \in Bilip(\L_q((t)))$ the {\em boundary maps} induced by $\Phi$.  In order to apply certain results from \cite{Dy} we require the boundary maps arising from our bijective quasi-isometry to be particularly nice, that is, \emph{measure linear}.  We first define this property and then  state Proposition \ref{blergh2}, which guarantees two things: first, that we may replace our original quasi-isometry with one whose boundary maps are measure linear, and second that the resulting quasi-isometry is also $k$-to-$1$, with measure linear constants that are products of the prime divisors of $q$.  Any omitted proofs can be found in \cite{Dy}.

\begin{definition} A map $\phi : \L_q((t)) \to \L_q((t))$  is said to be \emph{measure linear}  on $\L_q((t))$ if there exists some $\lambda$ such that for all $A \subset \L_q((t))$
$$\frac{\mu(\phi(A))} {\mu(A)} = \lambda.$$
where $\mu$ is the Hausdorff measure on $\L_q((t))$.
\end{definition}

\begin{proposition}\label{blergh2} Any quasi-isometry $\Phi: DL_d^k(q) \to DL_d^k(q)$ gives rise to a quasi-isometry $\bar{\Phi}$ where the induced boundary maps are measure-linear with measure-linear constants $\lambda_1, \ldots, \lambda_d$.  In addition if $\Phi$ is $k$-to-$1$ then $\bar{\Phi}$ is also $k$-to-$1$, and the $\lambda_i$ are products of prime divisors of $q$.
 \end{proposition}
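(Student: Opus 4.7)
The plan is to adapt the argument from \cite{Dy}, which handles the rank-two case $DL_2(q)$, to our higher-rank setting. All the technical work occurs on individual boundary factors $\L_q((t))$, so the modifications transfer essentially verbatim once Peng's theorem is invoked to produce the boundary maps.

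The first step is to apply Theorem \ref{pengthm} to write $\Phi$, up to bounded distance, as $\pi_k \circ (\phi_1 \times \cdots \times \phi_d \times id) \circ \bar{\pi}_k$ with bilipschitz boundary maps $\phi_i \in Bilip(\L_q((t)))$. The core step is then to replace each $\phi_i$ by a measure-linear bilipschitz map $\bar{\phi}_i$ at bounded distance. This is carried out in \cite{Dy} by a pigeonhole argument: among the finitely many possible measure ratios $\mu(\phi_i(C_{\xi,n}))/\mu(C_{\xi,n})$ appearing at a sufficiently large fixed scale $n$, one ratio $\lambda_i$ is realized on a positive density of clones, and bounded bilipschitz rearrangements inside each scale-$n$ clone then force the measure ratio to equal $\lambda_i$ everywhere; self-similarity of $\L_q((t))$ propagates measure-linearity to all scales. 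Assembling the $\bar{\phi}_i$ via the product formula of Section \ref{higherrankqi} produces $\bar{\Phi}$ at bounded distance from $\Phi$.

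Suppose now that $\Phi$ is $k$-to-$1$. Then preimages under $\bar{\Phi}$ have cardinality within a uniform constant of $k$. A bounded combinatorial adjustment at the vertex level---carried out box by box using the F\o lner sequence $B_h^k$ of Section \ref{boxsec} together with Hall's marriage theorem---produces an exactly $k$-to-$1$ map at bounded distance from $\bar{\Phi}$. Such adjustments permute vertices within balls of uniformly bounded radius and do not affect the induced boundary maps, so the $\bar{\phi}_i$ remain measure-linear.

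Finally, to see that each $\lambda_i$ is a product of prime divisors of $q$, note that the construction produces $\bar{\phi}_i$ sending a clone of measure $q^{-n}$ to a finite disjoint union of clones, so $\lambda_i = a_i/q^{N_i}$ for some positive integer $a_i$ and integer $N_i$; applying the analogous argument to $\bar{\phi}_i^{-1}$ gives $\lambda_i^{-1} = b_i/q^{M_i}$, and the identity $a_i b_i = q^{N_i+M_i}$ forces both $a_i$ and $b_i$ to have only prime factors dividing $q$. The main obstacle is the pigeonhole-plus-gluing construction that manufactures a measure-linear map at bounded distance from a bilipschitz one; this is the most delicate piece, but because it operates purely on the boundary $\L_q((t))$, the rank-two argument of \cite{Dy} applies to each factor without modification in the higher-rank setting.
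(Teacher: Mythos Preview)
Your proposal is correct and aligns with the paper's approach: the paper's entire proof is the single sentence ``follows from Propositions 4.5 and 4.7 of \cite{Dy},'' and your sketch is a faithful unpacking of what those propositions accomplish, correctly noting that the measure-linearization and prime-divisor arguments take place entirely on individual $\L_q((t))$ factors and therefore transfer unchanged once Theorem \ref{pengthm} supplies the boundary maps.

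One small remark on phrasing: your justification for preserving the $k$-to-$1$ property reads ``preimages under $\bar{\Phi}$ have cardinality within a uniform constant of $k$,'' followed by a Hall-type adjustment. Bounded distance from a $k$-to-$1$ map gives only an \emph{upper} bound on preimage sizes, not a uniform two-sided bound, so Hall's theorem does not apply directly in the form you suggest. The argument actually used in \cite{Dy} (and implicitly here) goes through Whyte's uniformly finite homology criterion: since $\bar{\Phi}$ is at bounded distance from $\Phi$, one has $\bar{\Phi}_*[X]=\Phi_*[X]=k[X]$, and Whyte's theorem then yields a $k$-to-$1$ map at bounded distance from $\bar{\Phi}$, hence with the same (measure-linear) boundary maps. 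This is morally the same as your Hall-style sketch but avoids the incorrect intermediate claim.
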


The proof of Proposition \ref{blergh2} follows from Propositions 4.5 and 4.7 of \cite{Dy}.

The sequence of maps we  consider is the following.
Beginning with our initial bijective quasi-isometry $\phi:DL_d^k(q) \rightarrow DL_d(q)$, we construct a $k$-to-$1$ quasi-isometry $\Phi:DL_d^k(q) \rightarrow DL_d^k(q)$. There is an induced quasi-isometry $\bar{\Phi}:DL_d^k(q) \rightarrow DL_d^k(q)$ where the resulting boundary maps $\phi_i, \cdots ,\phi_d$ are measure linear with constants $\lambda_1,\lambda_2, \cdots , \lambda_d$, where each $\lambda_i$ is a product of powers of prime factors of $q$.  In the next section, using these boundary maps, we construct another quasi-isometry $\Psi: DL_d^k(q) \rightarrow DL_d^k(q)$ which is a bounded distance from $\bar{\Phi}$. Without loss of generality we replace $\Phi$ with $\bar{\Phi}$ for the remainder of this paper.

\subsection{Boxes defined by boundaries}

The identification of $DL_d^k(q)$ and
$$\left(\prod_{i=1}^d \L_q((t))\right) \rtimes (k \Z \times \Z^{d-2})$$
described above allows us to define boxes in $DL_d^k(q)$  in terms of clones in the factors of $$ \L_q((t)) \times \cdots \times  \L_q((t)).$$

\begin{definition}\label{specialset} Given clones $C_i \subset \L_q((t))$ of size $\mu(C_1) = q^{b_i}$  for $i=1, \cdots ,d$  where $b_1$ is a multiple of $k$ set
\begin{itemize}
\item $h=\sum_{i=1}^d b_i$
\item  $a_i= b_i-h$ for $i=1, \ldots, d-1$
\item  $a_d= b_d - (d-1)h$.
\end{itemize}
If $h$ is a nonnegative integer then define
$$V_{b_1, \ldots, b_d}^k= \left\{ (v_1, \ldots, v_{d-1}) \mid  v_i  \in [a_i ,b_i] \textrm{ for } i=1, \ldots,d-1 \right\} \subset k\Z \times \Z^{d-2}.$$
\end{definition}

Note that $V_{b_1, \ldots, b_d}^k=V_h^k$ as in Definition \ref{boxdefn}.  Again, without loss of generality we will always assume that $h$ is a multiple of $k$.

\begin{observation}\label{clonetoboxlemma} Let $C_i \subset \L_q((t))$ be clones with $\mu(C_i)=q^{b_i}$ for $i=1, \ldots, d$ and $b_1$ a multiple of $k$. Let $V_h^k=V_{b_1, \cdots, b_d}^k$ be as in Definition \ref{specialset} above.
Then $$S_{C_1, \ldots , C_d} =\pi_k( C_1 \times \cdots \times C_d \times V_h) \subset DL_d^k(q)$$
is a box $B_h^k$ as in Definition \ref{boxdefn} and for each $v \in V_h$ we have that
$$|\rho^{-1}(v)|=q^{(d-1)h}=q^{b_1}q^{b_2} \ldots q^{b_{d-1}}q^{-\sum a_i} =\mu(C_1)\cdots \mu(C_d).$$
\end{observation}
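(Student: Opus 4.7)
Plan: The observation is a dictionary between two descriptions of the same combinatorial object --- a box in $DL_d^k(q)$ viewed either as a connected component of $\rho^{-1}(V_h^k)$ (Definition \ref{boxdefn}) or as the image under $\pi_k$ of a clone product (Definition \ref{specialset}). I would prove it by unpacking $\pi_k$ tree by tree, and then reading off the count from Lemma \ref{lemma:counting-vertices}.

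First, recall the core property of $\pi_k$ described in Section \ref{higherrankqi}: on each factor it collapses sets of the form $C_{\xi,v}\times\{v\}$ bijectively to a single vertex of the relevant tree at height $v$. Consequently, each clone $C_i$ with $\mu(C_i)=q^{b_i}$ is identified with a unique apex vertex $w_i$ of $T_i$ (using $\pi_k$ on $\bar T^{q^k+1}$ for $i=1$, which is consistent with the hypothesis $b_1\in k\Z$), and the sub-clones of $C_i$ at any deeper level $v_i$ correspond bijectively to the descendants of $w_i$ at height $v_i$. Now fix $v=(v_1,\ldots,v_{d-1})\in V_h^k$ and set $v_d=-(v_1+\cdots+v_{d-1})$. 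Decomposing $C_1\times\cdots\times C_d\times\{v\}$ as a disjoint union of sub-clone tuples at level $(v_1,\ldots,v_d)$ and applying $\pi_k$ termwise, each such tuple collapses to a unique vertex of $DL_d^k(q)$ whose $i$-th coordinate is a descendant of $w_i$ at height $v_i$. As $v$ ranges over $V_h^k$, the union of these images exhausts exactly the vertices of the connected component of $\rho^{-1}(V_h^k)$ determined by $(w_1,\ldots,w_d)$, which matches the alternate description of $B_h^k$ given just below Definition \ref{boxdefn} --- in particular, the $T_d$-subtree is the connected component of $h_d^{-1}[-\sum b_i,-\sum a_i]$ sitting below $w_d$.

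For the counting, Lemma \ref{lemma:counting-vertices} immediately yields $|\rho^{-1}(v)\cap B_h^k|=q^{(d-1)h}$ for each $v\in V_h^k$. The algebraic reformulation $q^{(d-1)h}=q^{b_1}\cdots q^{b_{d-1}}q^{-\sum a_i}=\mu(C_1)\cdots\mu(C_d)$ then follows from $\sum_{i=1}^{d-1}(b_i-a_i)=(d-1)h$, itself immediate from $a_i=b_i-h$ in Definition \ref{specialset}. The only real subtlety in the entire argument is keeping the sign convention straight so that the apex $w_i$ is correctly identified with the appropriate end of the $T_i$ box-subtree --- i.e. so that descendants of $w_i$ at heights in $[a_i,b_i]$ exhaust the subtree, and similarly for $T_d$. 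Once this is pinned down, the argument reduces entirely to bookkeeping between the clone-level data and the tree-descendant data.
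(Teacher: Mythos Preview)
Your proposal is correct and follows the same approach as the paper. The paper's own justification is extremely terse --- it simply says the observation ``follows from the definition of $\pi$'' and writes out the identity $\sum_{i=1}^{d-1}(b_i-a_i)=(d-1)h$ --- so your expansion of how $\pi_k$ carries clone products to tree-subtree products, together with the appeal to Lemma~\ref{lemma:counting-vertices}, is exactly the content the paper leaves implicit.
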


The observation follows from the definition of $\pi$ and from the fact that $b_i-a_i=h$ for $i=1,2, \cdots ,d-1$ and hence
$$\sum_{i=1}^{d-1} b_i - \sum_{i=1}^{d-1} a_i = \sum_{i=1}^{d-1} (b_i -a_i)=\sum_{i=1}^{d-1} h = (d-1)h.$$

\subsection{Defining the quasi-isometry $\Psi$.}
We now define a quasi-isometry $$\Psi: DL_d^k(q) \rightarrow DL_d^k(q)$$ which is a bounded distance from the map ${\Phi}$ and which
 is on average $ \frac{1}{\lambda_1 \lambda_2 \cdots \lambda_d}$-to-$1$.

\begin{lemma}\label{badonboxes} Let $\Psi: DL^k_d(q) \rightarrow DL^k_d(q)$ be a $(K,C)$-quasi-isometry defined by measure-linear boundary maps $\phi_i$ with measure-linear constants $\lambda_i$ for $i=1,2, \cdots ,d$.
Let $S_{C_1, \ldots , C_d}$ be a box as in Lemma \ref{clonetoboxlemma} with  $h\gg \log_d{K}$. Then, for $r= \log_q{K}$ we have
$$ \frac{1}{\lambda_1 \lambda_2 \cdots \lambda_d} (|  S_{C_1, \ldots , C_d}| - | \partial_r S_{C_1, \ldots , C_d}|) \leq  \sum_{x \in S_{C_1, \ldots , C_d}} | \Psi^{-1}(x) | \leq  \frac{1}{\lambda_1 \lambda_2 \cdots \lambda_d} |  S_{C_1, \ldots , C_d}| + K^d | \partial_r S_{C_1, \ldots , C_d}| $$
\end{lemma}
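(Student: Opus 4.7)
The plan is to interpret the sum as the cardinality of the preimage, $\sum_{x \in S}|\Psi^{-1}(x)| = |\Psi^{-1}(S_{C_1,\ldots,C_d})|$, and to decompose it by height. Since $\Psi = \pi_k \circ (\phi_1 \times \cdots \times \phi_d \times \mathrm{id}) \circ \bar\pi_k$ has the identity on the $\Z^{d-1}$-height factor, and both $\pi_k$ and $\bar\pi_k$ are compatible with the height map $\rho$, the map $\Psi$ is height-preserving. Thus $\Psi^{-1}(S) \subset \rho^{-1}(V_h^k)$ and we write $|\Psi^{-1}(S)| = \sum_{v \in V_h^k} N(v)$, where $N(v)$ counts preimages lying at height $v$.

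Next I would compute the ideal value of $N(v)$ via measure-linearity. At a fixed height $v = (v_1, \ldots, v_{d-1})$ with $v_d = -\sum_{i \leq d-1} v_i$, vertices correspond bijectively to products of clones $C(\xi_i, v_i)$ of measure $q^{-v_i}$, and such a vertex lies in $\Psi^{-1}(S)$ precisely when $C(\xi_i, v_i) \subset \phi_i^{-1}(C_i)$ for each $i$. Since $\mu(\phi_i^{-1}(C_i)) = \mu(C_i)/\lambda_i$, the number of clones of size $q^{-v_i}$ contained in $\phi_i^{-1}(C_i)$ is approximately $\mu(C_i)q^{v_i}/\lambda_i$. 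Multiplying across the $d$ factors, invoking $\sum_{i=1}^d v_i = 0$, and combining with the identity $\prod_{i=1}^d \mu(C_i) = q^{(d-1)h}$ from Observation \ref{clonetoboxlemma}, gives the ideal per-height count $N(v) = q^{(d-1)h}/(\lambda_1 \cdots \lambda_d)$, independent of $v$. Summing across $V_h^k$ would give the ideal global count $|S|/(\lambda_1 \cdots \lambda_d)$.

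Deviations from this ideal come from clones straddling $\partial \phi_i^{-1}(C_i)$. Because $\phi_i$ is $K$-bilipschitz, $\phi_i^{-1}$ distorts the boundary of $C_i$ by at most a factor $K$; translated to the tree, partially contained clones appear only when $v_i$ lies within $r = \log_q K$ of $a_i$ or $b_i$, equivalently when $v \in \partial_r V_h^k$. For interior $v \in V_h^k \setminus \partial_r V_h^k$ the counting is clean, giving $N(v) \geq q^{(d-1)h}/\prod \lambda_i$; summing and multiplying by $q^{(d-1)h}$ via Lemma \ref{lemma:counting-vertices} yields the lower bound $\frac{1}{\prod \lambda_i}(|S| - |\partial_r S|)$. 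For the upper bound, the full measure count on all of $V_h^k$ contributes $|S|/\prod \lambda_i$, and on each boundary vertex the excess multiplicity of $\Psi$ is at most $K^d$—the product of the per-factor $K$-bilipschitz bounds on the $\phi_i$—so the additional contribution is at most $K^d|\partial_r S|$.

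The main obstacle is the careful bookkeeping between the bilipschitz distortion of the boundary maps $\phi_i$ on $\L_q((t))$ and the geometric width-$r$ structure of the height box $V_h^k$: one needs to verify both that the fringe clones from $\phi_i^{-1}(\partial C_i)$ only affect $N(v)$ when $v \in \partial_r V_h^k$, and that the per-vertex multiplicity of $\Psi$ near the boundary is uniformly bounded by $K^d$. The hypothesis $h \gg \log_d K$ is used here to guarantee that $|V_h^k|$ dominates $|\partial_r V_h^k|$, so that the interior measure estimate provides the dominant contribution.
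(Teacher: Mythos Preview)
Your approach is essentially the paper's: decompose $|\Psi^{-1}(S)|$ by height $v\in V_h^k$, use measure linearity of the $\phi_i$ to get the per-height count $q^{(d-1)h}/\prod\lambda_i$ on the interior $V_h^k\setminus\partial_r V_h^k$, and bound the contribution from $\partial_r V_h^k$ using the $K$-bilipschitz constant. The paper carries out exactly this computation.

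One point of phrasing is worth tightening. You speak of ``clones straddling $\partial\phi_i^{-1}(C_i)$'' and of $\phi_i^{-1}$ ``distorting the boundary of $C_i$''; in the ultrametric $\L_q((t))$ clones are clopen and there is no boundary. The paper handles this via the structural fact (quoted from Lemma~6.3 of \cite{Dy}) that $\phi_i^{-1}(C_i)=\bigsqcup_j A_i^j$ is a finite disjoint union of clones, each of size $m_i\geq \mu(C_i)/K$. With this in hand the dichotomy is clean: a vertex-clone $C(\xi_i,v_i)$ is either contained in some $A_i^j$ or disjoint from $\phi_i^{-1}(C_i)$, unless $C(\xi_i,v_i)$ is strictly larger than some $A_i^j$, which forces $v$ into $\partial_r V_h^k$ with $r=\log_q K$. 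This is the precise substitute for your ``fringe clones,'' and it is what makes both the exactness of the interior count and the $K^d$ bound on the boundary excess (from $\mathrm{diam}\,\phi_i^{-1}(C_i)\leq K\mu(C_i)$) immediate. Your sketch already points at this mechanism; the clone decomposition just makes it rigorous.
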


\begin{proof} As in the proof of Lemma 6.3 in \cite{Dy} we know that $\phi_i^{-1}(C_i)= \sqcup A_{i}^j$
where $A_{i}^j$ are clones of size $$m_i=\mu(A_{i}^j) \geq (1/K) \mu(C_i). $$

In particular there are $\frac{1}{\lambda_im_i}\mu(C_i)$ many clones in $\phi_i^{-1}(C_i)$. This implies that, for a fixed $v \in \Z^{d-1}$, there are
$$\prod_{i=1}^{d} \frac{\mu(C_i)}{\lambda_i m_i}=  \frac{\mu (C_1)\mu(C_2) \cdots \mu(C_d)}{\lambda_1 \ldots \lambda_d} \cdot  \frac{1}{m_1 m_2\cdots m_d}$$
many sets of the form $$ A_{1}^{j_1}\times \cdots A_{d}^{ j_{d}} \times \{v\}$$ in the pre-image of $S_{C_1, \ldots , C_d}$ under $\phi_1 \times \cdots \times \phi_d \times id$. If $v\in V_{\log_qm_1, \ldots, \log_q m_d}$ (see Definition \ref{specialset})
 then by Lemma \ref{clonetoboxlemma} we have that $\pi(  A_{1}^{j_1}\times \cdots A_{d}^{ j_{d}} \times \{v\})$ contains $ \mu(A_{1}^{j_1}) \cdots \mu(A_{d}^{ j_{d}})=m_1m_2\cdots m_d$ many vertices.
  Which means that
$$ \bar{\pi}(DL_d^k(q)) \cap (A_{1}^{j_1}\times \cdots A_{d}^{ j_{d}} \times \{v\})$$ contains $m_1m_2\cdots m_d$ many images of vertices.

Now consider $v\in V_{b_1, \ldots,  b_d}\setminus \partial_r V_{b_1, \ldots,  b_d} $ where $r= \log_q K$.
Then we we have
$$\frac{\mu (C_1)\mu(C_2) \cdots \mu(C_d)}{\lambda_1 \ldots \lambda_d} \cdot  \frac{1}{m_1m_2\cdots m_d} \cdot m_1m_2\cdots m_d = \frac{\mu (C_1)\mu(C_2) \cdots \mu(C_d)}{\lambda_1 \ldots \lambda_d}$$
many vertices being mapped to $S_{C_1, \ldots , C_d}$ at $v \in V$. But by Lemma \ref{clonetoboxlemma} there are $\mu (C_1)\mu(C_2) \cdots \mu(C_d)$ many vertices in $\rho^{-1}(v)$. Therefore there are $1/\lambda_1 \cdots \lambda_d$ many vertices being mapped onto each vertex on average.

Finally for $v \in \partial_r V_{b_1, \ldots,  b_d}$ it is possible, depending on the choice of $\bar{\pi}$, that no vertices or as many as $\prod_{i=1}^d diam(\phi_i^{-1}(C_i))$ are mapped to $\rho^{-1}(v)\cap B_h$. But
$diam(\phi_i^{-1}(C_i)) \leq K \mu(C_i)$ so that there are at most $K^{d} \mu(C_1)\cdots \mu(C_d)$ vertices being mapped to $\rho^{-1}(v)\cap B_h \subset \partial_r B_h.$
\end{proof}

The remainder of the arguments rely on the theory of uniformly finite homology  in spaces of uniformly discrete bounded geometry (denoted $H^{uf}_i(X)$). This is developed in \cite{BW1,BW2} and \cite{W} and an overview is given in \cite{Dy}.  We refer the reader to those references for background.
The main results we will use are the following:

\begin{enumerate}
\item For any such space $X$ there is a fundamental class $[X]$ and if $\chi: X \rightarrow X$ is a $k$-to-$1$ map, then $\chi_*([X]) = k[X]$.
\item Any two quasi-isometries that are a bounded apart induce the same map on homology.
\item For any chain $c=\sum_{x\in X}a_xx \in C_0^{uf}(X)$  we have $[c]=0 \in H_0^{uf}(X)$ if and only if there is some $r>0$ so that for any F\o lner sequence $\{F_i\}$,
\bea
        \left| \sum_{x\in T_i}a_x \right|=O(|\partial_r F_i|).
\eea
\end{enumerate}

\begin{proposition}\label{blergh} If $X=DL_d^k(q)$ and $\phi_1, \ldots, \phi_d$ are measure-linear maps of $\L_q((t))$ with constants $\lambda_1, \ldots, \lambda_d$, define $\Psi = \pi_k \circ (\phi_1 \circ \phi_2 \circ \cdots \circ \phi_d) \circ \bar{\pi}_k \in QI(X)$ as above.  If $k \neq 1/\lambda_1 \lambda_2 \cdots \lambda_d$ then
$$\Psi_*(X)\neq k[X].$$
\end{proposition}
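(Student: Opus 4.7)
The approach is to apply criterion (3) on uniformly finite homology directly to the $0$-chain
\[
c \;=\; \Psi_*[X] \,-\, k[X] \;=\; \sum_{x} a_x\, x, \qquad a_x \;=\; |\Psi^{-1}(x)| - k.
\]
By that criterion, proving $[c] \neq 0$ reduces to exhibiting one F\o lner sequence $\{F_i\}$ along which $|\sum_{x \in F_i} a_x|$ fails to be $O(|\partial_r F_i|)$ for \emph{every} fixed radius $r$.

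The natural choice is the sequence of boxes $\{B_h\}$ from Section \ref{boxsec}, which is F\o lner by the lemma just before Proposition \ref{ktoone}. Feeding these into Lemma \ref{badonboxes} (at its built-in radius $r_0 = \log_q K$) and subtracting $k|B_h|$ yields a bound of the form
\[
\sum_{x \in B_h} a_x \;=\; \alpha\, |B_h| \;+\; E_h, \qquad |E_h| \;\leq\; M\,|\partial_{r_0} B_h|,
\]
where $\alpha := \tfrac{1}{\lambda_1 \cdots \lambda_d} - k$ and $M$ depends only on $K$, the $\lambda_i$, and $d$. Under the hypothesis $k \neq 1/(\lambda_1 \cdots \lambda_d)$, the scalar $\alpha$ is nonzero.

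The finishing move is the standard separation between volume and boundary growth in a F\o lner sequence. Since $|\partial_{r_0} B_h|/|B_h| \to 0$, the error $|E_h|$ is eventually at most $(|\alpha|/2)|B_h|$, so
\[
\bigl|\, {\textstyle\sum}_{x \in B_h}\, a_x \,\bigr| \;\geq\; \tfrac{|\alpha|}{2}\,|B_h|
\]
for $h$ large. For any fixed $r > 0$ the F\o lner property also yields $|\partial_r B_h|/|B_h| \to 0$, so $|\sum_{x \in B_h} a_x|/|\partial_r B_h| \to \infty$. No $r$ can therefore deliver the $O(|\partial_r B_h|)$ bound required by criterion (3), whence $[c] \neq 0$, i.e.\ $\Psi_*[X] \neq k[X]$.

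The only real subtlety is that the error radius $r_0 = \log_q K$ in Lemma \ref{badonboxes} must be \emph{independent of $h$}; once that fixed-radius uniform control is in hand, the rest of the argument is a routine comparison between the linear growth of the main term $\alpha |B_h|$ and the sublinear growth of any $|\partial_r B_h|$. The bulk of the work has therefore already been done in Lemma \ref{badonboxes}, and Proposition \ref{blergh} is essentially the homological payoff.
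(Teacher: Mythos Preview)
Your proposal is correct and follows essentially the same approach as the paper: define the chain $c$ with coefficients $a_x = |\Psi^{-1}(x)| - k$, evaluate it on the F\o lner sequence of boxes via Lemma \ref{badonboxes}, and observe that when $k \neq 1/(\lambda_1\cdots\lambda_d)$ the main term grows like $|B_h|$ while any $|\partial_r B_h|$ is $o(|B_h|)$, so criterion (3) fails. Your write-up is in fact slightly more explicit than the paper's about handling an arbitrary radius $r$ and about the fixed-radius control coming from Lemma \ref{badonboxes}.
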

\begin{proof}
The proof of this proposition is similar to the proof of Proposition 6.4 in \cite{Dy}.

Let $S_{C_1, \ldots , C_d}$ be an increasing sequence of boxes in $DL^k_d(q)$, which is necessarily a F\o lner sequence.
Let $c$ be the chain defined by
$$c = \sum_{x\in X} a_x x$$
where $a_x=|\psi^{-1}(x)| -k$.
By Lemma \ref{badonboxes} we have that for $r= \log_q{K}$,
$$ \frac{1}{\lambda_1 \lambda_2 \cdots \lambda_d} (|  S_{C_1, \ldots , C_d}| - | \partial_r S_{C_1, \ldots , C_d}|) \leq  \sum_{x \in S_{C_1, \ldots , C_d}} | \Psi^{-1}(x) | \leq  \frac{1}{\lambda_1 \lambda_2 \cdots \lambda_d} |  S_{C_1, \ldots , C_d}| + K^d | \partial_r S_{C_1, \ldots , C_d}| $$
so that unless $ \frac{1}{\lambda_1 \lambda_2 \cdots \lambda_d}  = k$  we have that $|\sum_{x\in S_{C_1, \ldots , C_d}} a_x |$ is not $O(|\partial_r S_{C_1, \ldots , C_d}|)$ since $|S_{C_1, \ldots , C_d}|$ is not $O(|\partial_r S_{C_1, \ldots , C_d}|)$.
\end{proof}

To conclude the proof of Theorem \ref{thm:main}, note that if $\Psi$ and $\Phi$ have the same boundary maps then they are bounded distance apart. In particular,
$$\Psi_*([X])= \Phi_*([X]).$$ If $\Phi$ is $k$-to-$1$ then $\Phi_*[X]=k[X]$ but if $k$ is not a product of primes appearing in $q$ then by Proposition \ref{blergh2} $k\neq 1/\lambda_1\lambda_2 \cdots \lambda_d$, contradicting Proposition \ref{blergh}.

{\em Proof of Corollary \ref{cor:mainthm}.} Choose $d$ and $q$ so that $d-1 \leq p$ for all primes $p$ dividing $q$, and hence the one skeleton of $DL_d(q)$ is the Cayley graph of the group $\Gamma_d(q)$. Then by Corollary 4.5 of \cite{BNW} $\Gamma_d(q)$ is of type $F_{q-1}$ and hence so is $\Gamma_d^k(q)$ for any $k \in \Z^+$.

For any $k \in \Z^+$, the group $\Gamma_d(q)$ and its index $k$ subgroup $\Gamma_d^k(q)$ are quasi-isometric.  Choose $k \in \Z^+$ which is not a product of prime factors appearing in $q$. Then by Theorem \ref{thm:main} there is no bilipschitz map (that is, bijective quasi-isometry) between $DL_d(q)$ and $DL_d^k(q)$, hence no such map between $\Gamma_d(q)$ and $\Gamma_d^k(q)$ exists.  Thus $\Gamma_d(q)$ and $\Gamma_d^k(q)$ are quasi-isometric but not bilipschitz equivalent. \qed

\section{Appendix}
The goal of this appendix is to place
Theorem \ref{pengthm} (which we restate below with a slightly different perspective) in the context of the work of Eskin, Fisher and Whyte and the second author in \cite{EFW,EFW1,EFW2, P1,P2}.  Namely, using the quasi-isometry between $DL_d(q)$ and $\left( \prod_{i=1}^d  \L_q((t)) \right) \rtimes \Z^{d-1}$ described in Section \ref{higherrankqi}, Theorem \ref{pengthm} can be stated as follows.

{\bf Theorem \ref{pengthm}} (Peng). \emph{Any $(K,C)$ quasi-isometry $$\Phi: \left( \prod_{i=1}^d  \L_q((t)) \right) \rtimes \Z^{d-1} \to \left( \prod_{i=1}^d  \L_q((t)) \right) \rtimes \Z^{d-1}$$ is, up to permuting the $\L_q((t))$ factors, a bounded distance from a map of the form
$$ \phi_1 \times \cdots \phi_d \times id $$
where $\phi_i \in Bilip(\L_q((t)))$.}

We will state the main theorems of Eskin, Fisher and Whyte and the second author but
for a more detailed summary and outline of their work we refer the reader to \cite{EF}. In particular Section 4.4 of \cite{EF} describes the second author's extension of Eskin, Fisher and Whyte's original work.

In \cite{EFW,EFW1,EFW2} Eskin, Fisher and Whyte prove the following theorem.

\begin{theorem}[Eskin-Fisher-Whyte]\label{efwthm} Let $X=Sol$ or $DL_2(q)$. Then any self quasi-isometry of $X$ is, up to permuting the first two coordinates, a bounded distant from a map of the form
$$f_l \times f_u \times id$$
where $f_l, f_u$ are bilipschitz maps of $\R$ if $X=Sol$ or bilipschitz maps of $\L_q((t))$ if $X=DL_2(q)$.
\end{theorem}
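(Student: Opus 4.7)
The strategy I plan to follow is the one developed by Eskin--Fisher--Whyte in \cite{EFW,EFW1,EFW2}, exploiting the fact that both $Sol$ and $DL_2(q)$ admit a horocyclic product decomposition. Namely, I would write $X = Y_1 \times_h Y_2$ where $Y_1,Y_2$ are two copies of $\mathbb{H}^2$ (in the $Sol$ case) or of $T^{q+1}$ (in the $DL_2(q)$ case), glued along a height function so that heights sum to zero. Fixing the distinguished boundary point at infinity in each $Y_i$ identifies the remaining boundary at infinity with $\R$ (respectively $\L_q((t))$), yielding two ``horocyclic boundaries'' $\partial_l X$ and $\partial_u X$. Each vertical geodesic in $X$ is then specified by a pair of points, one in each boundary, together with a height, and the two horocyclic product structures give two natural foliations of $X$ by totally geodesic hyperplanes.

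The heart of the argument is to show that any self quasi-isometry $\Phi$ of $X$ coarsely preserves these two horocyclic product structures, up to possibly swapping them. For this I plan to apply coarse differentiation to the natural quasi-flats, namely the products of pairs of vertical geodesics, which give totally geodesic flats in $Sol$ and their tree analogues in $DL_2(q)$. On a generic base point and scale, $\Phi$ restricted to such a quasi-flat will look affine; chaining this affine behavior carefully across scales forces the image of each horocyclic hyperplane of one foliation to lie within bounded distance of another such hyperplane, possibly after interchanging the $Y_1$ and $Y_2$ factors. This coarse differentiation step is the principal obstacle: establishing the ``many generic scales'' theorem in this setting, controlling the scales at which $\Phi$ can fail to be affine, and extracting foliation preservation accounts for the bulk of the technical work in EFW.

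With foliation preservation in hand, I would extract the boundary maps by restricting $\Phi$ to a horocyclic hyperplane and projecting onto a single $Y_i$, obtaining a quasi-isometry of $\mathbb{H}^2$ (or $T^{q+1}$), whose boundary extension is a quasi-symmetric homeomorphism of $\R$ (respectively $\L_q((t))$). Call these maps $f_l$ and $f_u$ on the two boundaries. A further application of coarse differentiation, now at the boundary level, combined with the fact that a quasi-symmetric self-map of $\R$ or $\L_q((t))$ that is uniformly affine on many scales must be bilipschitz, upgrades $f_l$ and $f_u$ to genuine bilipschitz maps. To conclude, I would use the fact that every point of $X$ is determined up to bounded error by the pair of endpoints of its two vertical geodesics together with its height; hence any quasi-isometry inducing the pair $(f_l, f_u)$ on the two boundaries must agree with $f_l \times f_u \times id$ up to a uniform bounded distance, completing the proof.
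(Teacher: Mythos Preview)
The paper does not give its own proof of this theorem; it simply attributes the result to Eskin--Fisher--Whyte and cites \cite{EFW,EFW1,EFW2}. The appendix does sketch the common strategy underlying this theorem and its higher-rank generalization (Theorems~\ref{pengthm} and~\ref{pengorigthm}), and your outline is broadly consistent with that sketch: coarse differentiation applied to geodesic segments in the height direction, showing that certain distinguished subsets are coarsely preserved, and then reading off boundary maps.

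There are two points where your language diverges from the paper's account. First, where you speak of ``horocyclic hyperplanes'' and ``foliations,'' the paper (following EFW) organizes the argument around \emph{quadrilaterals}: four geodesic segments sharing endpoints pairwise in the lower and upper boundaries. The basic step is that quadrilaterals go to within bounded distance of quadrilaterals, and from this one deduces that \emph{blocks} (preimages of points under projection to one tree or hyperbolic factor) go to blocks. This is equivalent in content to your foliation-preservation statement but is the formulation actually used. Second, your mechanism for upgrading $f_l,f_u$ from quasisymmetric to bilipschitz---``coarse differentiation at the boundary level'' plus an ``affine on many scales implies bilipschitz'' principle---is not how this step goes. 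The bilipschitz conclusion comes instead from the fact that the height function is coarsely preserved (up to sign): once blocks go to blocks on both sides, the induced $\Z$- (or $\R$-) coordinate map is an isometry up to bounded error, and this forces the boundary maps to distort distances by at most a multiplicative constant. No separate differentiation argument on the boundary is needed, and in the $\L_q((t))$ case quasisymmetric already implies bilipschitz for elementary ultrametric reasons.
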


Recall that $Sol= \R^2 \rtimes \R$ where the action of $\R$ on $\R^2$ is given by any matrix $A \in SL_2(\R)$.  We can also view $Sol$ as a subset of the product of two hyperbolic planes:  $$Sol =\{((x_1,y_1),(x_2,y_2))\in \mathbb{H}^2 \times \mathbb{H}^2 \mid \ln{y_1}+\ln{y_2}=0\}.$$
This gives $Sol$ an analogous structure to $DL_2(q)$.

The second author proves a broad generalization of Theorem \ref{efwthm} in \cite{P1,P2} that includes many solvable Lie groups of the form $\R^n \rtimes \R^k$. We will only state her theorem in the case where the solvable Lie group has analogous structure to $DL_d(q)$, namely when  $n=d$, $k=d-1$ and the action of $(t_1, \ldots, t_{d-1}) \in \R^{d-1}$ on $\R^n$ is given
by multiplication by the exponential of

$$\bm  t_1 & 0 &  \cdots & 0 \\ 0 & t_2 & 0  & \vdots \\ \vdots& 0 &  \vdots & 0 \\  0 & \cdots&   0 & -(t_1 + \cdots + t_{d-1}) \fm.$$
%

Note that if we restrict the $t_i$ to lie in $\Z$ instead of $\R$ then this matrix  defines the action of $\Z^{d-1}$ on $\left( \prod_{i=1}^d  \L_q((t)) \right)$ that gives the identification of $DL_d(q)$ with $\left( \prod_{i=1}^d  \L_q((t)) \right) \rtimes \Z^{d-1}$.
\begin{theorem}[Peng]\label{pengorigthm}Any self quasi-isometry of $ \R^d \rtimes \R^{d-1}$ is, up to permuting the coordinates of $\R^d$, a bounded distance from a  map of the form
$$f_1 \times \cdots \times f_d\times id$$
where $f_i$ is a bilipschitz map of $\R$. \end{theorem}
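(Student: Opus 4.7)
The plan is to follow the general strategy of Eskin-Fisher-Whyte \cite{EFW,EFW1,EFW2}, adapted to the higher-rank setting where the abelian factor is $\R^{d-1}$ rather than $\R$. The group $G = \R^d \rtimes \R^{d-1}$ carries a Weyl-chamber structure on $\R^{d-1}$ cut out by the hyperplanes on which the weights $\{t_1,\ldots,t_{d-1},-(t_1+\cdots+t_{d-1})\}$ change sign. Each chamber determines a decomposition of $\R^d$ into expanding and contracting eigenspaces for the $\R^{d-1}$-action, and there are $d$ horospherical coordinate foliations, each with leaf $\R$. A quasi-isometry of $G$ must ultimately be shown to permute these $d$ foliations and induce a bilipschitz map on each factor.

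First I would prove a higher-rank coarse differentiation theorem extending that of \cite{EFW1}: for any quasi-isometry $\Phi$ of $G$, there is a positive-density set of basepoints and scales (scales now ranging over $\R^{d-1}$) on which $\Phi$ is within small additive error of an affine map respecting the horospherical product structure. Next, using coarse differentiation along rays in different Weyl chambers of $\R^{d-1}$, I would show that the expanding/contracting splittings of $\R^d$ are preserved up to bounded error in every chamber, and that the consistency of these splittings across chambers forces a single global permutation $\sigma$ of the $d$ horospherical $\R$-factors. After precomposing with $\sigma$, we obtain induced boundary maps $f_i : \R \to \R$ on each factor; each $f_i$ is bilipschitz by the standard argument converting quasi-isometry constants into boundary dilation bounds (as in Section 6 of \cite{EFW}). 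Finally, since $f_1 \times \cdots \times f_d \times id$ and $\Phi$ induce the same maps on all $d$ boundaries and act trivially on the $\R^{d-1}$ factor after coarse differentiation, they must agree up to bounded distance.

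The main obstacle is the coherence argument in the chamber step. In rank one, the two horospherical foliations correspond to the two ends of $\R$, and $\Phi$ either preserves or swaps them; the analysis reduces to a single pair of boundary maps. In higher rank, each of the $d$ horospherical foliations is ``visible'' as the expanding foliation in only some of the Weyl chambers of $\R^{d-1}$, and one must track how coarse-differentiation limits in different chambers interact and assemble into a single global permutation of the $d$ factors. This coherence is the technical heart of Peng's work in \cite{P1,P2} and requires a careful analysis of intersections of quasi-flats in higher-rank solvable groups, together with rigidity results forcing a chamber-level permutation to extend globally to the whole horospherical structure of $G$.
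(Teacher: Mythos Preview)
Your proposal and the paper's treatment are both sketches rather than proofs---the paper does not prove this theorem either, but cites Peng's papers \cite{P1,P2} and gives an overview in the appendix. Both outlines follow the Eskin--Fisher--Whyte strategy generalized to higher rank, so in that sense your approach is the same. However, your description is organized around Weyl chambers, horospherical foliations, and a ``coherence across chambers'' step, whereas the paper's outline is organized around two concrete geometric objects that you do not mention: \emph{quadrilaterals} (four geodesic segments in a rank-one subspace $\mathcal{H}_{\vec v}$ with prescribed coincidence at the endpoints, as in Definition~3.1 of \cite{EFW}) and \emph{blocks} (preimages of points under the projection from $\mathcal{H}_{\vec v}$ to the associated tree-like space). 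The basic step in the actual proof is that quadrilaterals go to near-quadrilaterals under a quasi-isometry, and the bulk of the work is showing that blocks go to near-blocks; your ``coherence'' obstacle is essentially this block analysis. The paper also emphasizes a box-tiling step: one first shows the quasi-isometry is close to a standard map on most small sub-boxes of a large box, and then patches these local standard maps into a single global one.

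One genuine gap in your sketch: your final sentence asserts that once $\Phi$ and $f_1\times\cdots\times f_d\times id$ induce the same boundary maps, they must agree up to bounded distance. This is not automatic in higher rank and is exactly where the box-tiling and block arguments enter; the a~priori conclusion from coarse differentiation is only sublinear agreement on a large-measure set, and upgrading this to uniform bounded distance on the whole space is a separate, nontrivial step in \cite{P1,P2}.
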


The second author's results were not written to include the $DL_d(q)$ case in order to avoid cumbersome notation, but the same dichotomy that enables Eskin, Fisher and Whyte to prove Theorem \ref{efwthm} for $Sol$ and $DL_2(q)$ simultaneously yields a proof of Theorem \ref{pengthm}.

We briefly rework some of the terminology found in \cite{P1,P2} into our context. First define
$$\alpha_i: \Z^{d-1} \to \Z$$ to be the homomorphism that is projection onto the $i$th coordinate for $i=1, \cdots, d-1$  and  set  $$\alpha_d(t_1, \cdots, t_{d-1})=-(t_1+t_2 \cdots + t_{d-1}).$$This ensures that $\sum_i \alpha_i=0$ and gives the action of $\Z^{d-1}$ on $ \prod_{i=1}^d  \L_q((t))$. In the context of the second author's work, the $\alpha_i$ are known as \emph{roots}.

A \emph{flat} is a subset of the form $\left( ( P_i(t) )_{i}, \mathbb{Z}^{d-1} \right)$ where  $ (P(t) )_{i} \in   \prod_{i=1}^d  \L_q((t))$ is fixed. Geodesics that lie in these flats have the form $\left( ( P_i(t) )_{i}, \mathbb{Z} \vec{v} \right)$ where $\vec{v} \in \mathbb{Z}^{d-1}$. The images of these geodesics under a quasi-isometry are the quasi-geodesics to which \emph{coarse differentation} is applied. Loosely speaking, coarse differentiation is the process of finding a scale at which a quasi-geodesic looks approximately like a geodesic.


Note that, if $\vec{v} \in \mathbb{Z}^{d-1}$ is not close to the kernel of $\alpha_{i}$ for any $i$ (which quantitatively means that $|\alpha_{i}(\vec{v})| \geq \delta |\vec{v}|$ for a pre-fixed $\delta > 0$ and a fixed norm on $\mathbb{Z}^{d-1}$) then the subspace
$$ \mathcal{H}_{\vec{v}}=
\{ ( (x_{i})_{i}, \vec{u} )  \mid (x_i)_i \in  \prod_{i=1}^d  \L_q((t)),\  \vec{u} \in \mathbb{Z} \vec{v} \} $$ is quasi-isometric to a Diestel-Leader graph. We can simplify the notation by denoting a point in this Diestel-Leader graph by  $(x_{+},x_{-}, t \vec{v})$ where $t \in \mathbb{Z}$, $x_{+}=(x_{i})_{\alpha_{i}(\vec{v})> 0}$, and $x_{-}=(x_{i})_{\alpha_{i}(\vec{v})< 0}$.
A \emph{quadrilateral} is given by a collection of four geodesic segments of the form
$$ (y_{+}, y_{-},[-L,L] \vec{v}) ,\ (z_{+}, y_{-},[-L,L] \vec{v}),\ (y_{+}, z_{-},[-L,L] \vec{v}),\ (z_{+}, z_{-},[-L,L] \vec{v}) $$ where
$$ (y_{+},p,L_{+}) = (z_{+},p,L_{+}),\ (q,y_{-},L_{-}) = (q, z_{-}, L_{-})\mbox{ for } p \in \{ y_{-}, z_{-} \},\ q \in \{y_{+},z_{+} \} .$$
We refer the reader to Definition 3.1 of \cite{EFW} for the characteristic properties of a quadrilateral.
The basic step of the proofs of Theorems \ref{pengthm} and \ref{pengorigthm} (and of Theorem \ref{efwthm}) is to show that under any quasi-isometry, quadrilaterals are sent to within a bounded distance of quadrilaterals. This is done by applying the theory of coarse differentiation to the images of the geodesic segments defining the quadrilateral.

Finally, for each generic vector $\vec{v}$ we also have a projection from the subspace 
$\mathcal{H}_{\vec{v}}$ to the space  $$\prod_{\{i \mid  \alpha_{i}(\vec{v}) > 0\} }\left(\mathcal{L}_{q}((t)) \right)_i \rtimes \mathbb{Z}$$  where the action of $\mathbb{Z}$ is dictated by the action of $\vec{v}$. (Note that this space is  quasi-isometric to a tree.)
A \emph{block} associated to $\vec{v}$ is just the pre-image of a point under this projection. A large part of the proof is spent analyzing how blocks behave under quasi-isometry and ultimately showing that blocks are mapped to within bounded distance of blocks. In comparison to the proof of Theorem \ref{efwthm}, in the proof of Theorem \ref{pengorigthm} blocks play the same role as horocycles or height level sets in $\mathbb{H}^2$ and $T^{q+1}$. 

The proof of Theorems \ref{pengorigthm} and \ref{pengthm} proceeds by first focusing on a large box, tiled by much smaller boxes; these are the same as the boxes that are defined in Section \ref{boxsec}. The size of the smaller boxes is determined by the coarse differentiation procedure applied to the special geodesics in the large box. 
Then one shows that on a large fraction of most of these smaller boxes a quasi-isometry is bounded distance from a \emph{standard map} (i.e. a map of the form that appears in the conclusions of Theorems \ref{pengthm} and \ref{pengorigthm}).   A priori, the standard map may be different for each smaller box but after analyzing how blocks behave under quasi-isometries one can conclude that the quasi-isometry is sub-linearly close to a single standard map on a large portion of the large box. The final step is to show that this implies that the quasi-isometry is uniformly bounded distance from a standard map on the whole space.

\bibliographystyle{plain}
\bibliography{refs}

\begin{thebibliography}{10}

\bibitem{AR}
Margarita Amchislavska and Timothy Riley.
\newblock {L}amplighters, metabelian groups, and horocyclic products of trees.
\newblock preprint.

\bibitem{BNW}
Laurent Bartholdi, Markus Neuhauser, and Wolfgang Woess.
\newblock Horocyclic products of trees.
\newblock {\em J. Eur. Math. Soc. (JEMS)}, 10(3):771--816, 2008.

\bibitem{BW1}
Jonathan Block and Shmuel Weinberger.
\newblock Aperiodic tilings, positive scalar curvature and amenability of
  spaces.
\newblock {\em J. Amer. Math. Soc.}, 5(4):907--918, 1992.

\bibitem{BW2}
Jonathan Block and Shmuel Weinberger.
\newblock Large scale homology theories and geometry.
\newblock In {\em Geometric topology ({A}thens, {GA}, 1993)}, volume~2 of {\em
  AMS/IP Stud. Adv. Math.}, pages 522--569. Amer. Math. Soc., Providence, RI,
  1997.

\bibitem{BK1}
D.~Burago and B.~Kleiner.
\newblock Separated nets in {E}uclidean space and {J}acobians of bi-{L}ipschitz
  maps.
\newblock {\em Geom. Funct. Anal.}, 8(2):273--282, 1998.

\bibitem{CCMT}
Pierre-Emmanuel Caprace, Yves de~Cornulier, Nicolas Monod, and Romain Tessera.
\newblock {A}menable hyperbolic groups.
\newblock to appear in J. Eur. Math. Soc..

\bibitem{Dy}
Tullia Dymarz.
\newblock Bilipschitz equivalence is not equivalent to quasi-isometric
  equivalence for finitely generated groups.
\newblock {\em Duke Math. J.}, 154(3):509--526, 2010.

\bibitem{EF}
Alex Eskin and David Fisher.
\newblock Quasi-isometric rigidity of solvable groups.
\newblock In {\em Proceedings of the {I}nternational {C}ongress of
  {M}athematicians. {V}olume {III}}, pages 1185--1208. Hindustan Book Agency,
  New Delhi, 2010.

\bibitem{EFW}
Alex Eskin, David Fisher, and Kevin Whyte.
\newblock Coarse differentiation of quasi-isometries {I}: {S}paces not
  quasi-isometric to {C}ayley graphs.
\newblock {\em Ann. of Math. (2)}, 176(1):221--260, 2012.

\bibitem{EFW1}
Alex Eskin, David Fisher, and Kevin Whyte.
\newblock Coarse differentiation of quasi-isometries {I}: {S}paces not
  quasi-isometric to {C}ayley graphs.
\newblock {\em Ann. of Math. (2)}, 176(1):221--260, 2012.

\bibitem{EFW2}
Alex Eskin, David Fisher, and Kevin Whyte.
\newblock Coarse differentiation of quasi-isometries {II}: {R}igidity for {S}ol
  and lamplighter groups.
\newblock {\em Ann. of Math. (2)}, 177(3):869--910, 2013.

\bibitem{M}
C.~T. McMullen.
\newblock Lipschitz maps and nets in {E}uclidean space.
\newblock {\em Geom. Funct. Anal.}, 8(2):304--314, 1998.

\bibitem{P1}
Irine Peng.
\newblock Coarse differentiation and quasi-isometries of a class of solvable
  {L}ie groups {I}.
\newblock {\em Geom. Topol.}, 15(4):1883--1925, 2011.

\bibitem{P2}
Irine Peng.
\newblock Coarse differentiation and quasi-isometries of a class of solvable
  {L}ie groups {II}.
\newblock {\em Geom. Topol.}, 15(4):1927--1981, 2011.

\bibitem{Shalom}
Yehuda Shalom.
\newblock Harmonic analysis, cohomology, and the large-scale geometry of
  amenable groups.
\newblock {\em Acta Math.}, 192(2):119--185, 2004.

\bibitem{STW}
Melanie Stein, Jennifer Taback, and Peter Wong.
\newblock {A}utomorphisms of higher rank lamplighter groups.
\newblock preprint.

\bibitem{W}
Kevin Whyte.
\newblock Amenability, bi-{L}ipschitz equivalence, and the von {N}eumann
  conjecture.
\newblock {\em Duke Math. J.}, 99(1):93--112, 1999.

\end{thebibliography}

\end{document}